\renewcommand{\epsilon}{\varepsilon}
\newtheorem{theorem}{Theorem}[section]
\newtheorem{lemma}[theorem]{Lemma}
\newtheorem{corr}[theorem]{Corollary}
\newtheorem{proposition}[theorem]{Proposition}
\newtheorem{deff}[theorem]{Definition}
\newtheorem{remark}[theorem]{Remark}
\newcommand{\bth}{\begin{theorem}}
\newcommand{\ble}{\begin{lemma}}
\newcommand{\bcor}{\begin{corr}}
\newcommand{\bdeff}{\begin{deff}}
\newcommand{\bprop}{\begin{proposition}}
\newcommand{\ele}{\end{lemma}}
\newcommand{\ecor}{\end{corr}}
\newcommand{\edeff}{\end{deff}}
\newcommand{\eprop}{\end{proposition}}
\newcommand{\la}{\lambda}
\newcommand{\eps}{\varepsilon}
\newcommand{\supp}{\text{supp }}
\renewcommand{\Pi}{\varPi}
\renewcommand{\epsilon}{\varepsilon}
\newcommand{\R}{{\mathbb R}}
\newcommand{\ls}{\lesssim}
\newcommand{\gs}{\gtrsim}
\newcommand{\1}{{\rm 1\hspace*{-0.4ex}%
		\rule{0.1ex}{1.52ex}\hspace*{0.2ex}}}
\numberwithin{equation}{section}
\begin{document}
		\title[]{Restriction of Schr\"odinger eigenfunctions to submanifolds}
		\author{Xiaoqi Huang, Xing Wang and Cheng Zhang}
			\address{Department of Mathematics\\
			Louisiana State University\\
			Baton Rouge, LA 70803, USA}
		\email{xhuang49@lsu.edu}
		\address{School of Mathematics\\
			Hunan University\\
			Changsha, HN 410012, China}
		\email{xingwang@hnu.edu.cn}
		\address{Mathematical Sciences Center\\
			Tsinghua University\\
			Beijing, BJ 100084, China}
		\email{czhang98@tsinghua.edu.cn}
		\date{}
		\keywords{Eigenfunction; Schr\"odinger; singular potential}
		\begin{abstract}For Schr\"odinger operators $H_V=-\Delta_g+V$ with critically singular potentials $V$ on compact manifolds, we prove sharp estimates for the restriction of   eigenfunctions to submanifolds. Our method refines the perturbative argument by Blair-Sire-Sogge \cite{BSS} and enables us to deal with submanifolds of all codimensions. As applications, we obtain improved estimates on negatively curved manifolds and flat tori. In particular, we extend the uniform $L^2$ restriction estimates on flat tori by Bourgain-Rudnick \cite{br2012} to singular potentials.
		
		\end{abstract}
		
		\maketitle
		\section{Introduction}
Let $n\ge2$. Let $(M,g)$ be a compact, smooth, $n$-dimensional Riemannian manifold without boundary, and let $\Delta_g$ denote the associated Laplace-Beltrami operator. The concentration properties of the eigenfunctions of $\Delta_g$ depend on the geometry of $(M,g)$. On manifolds with integrable geodesic flows, such as the sphere, eigenfunctions like zonal harmonics or Gaussian beams may be highly concentrated near certain submanifolds \cite{Sogge86}. In contrast, on manifolds with chaotic geodesic flows, such as hyperbolic surfaces, high-energy eigenfunctions are usually expected to become equidistributed \cite{RS94,DJ18,DJN22,z1987}. Nevertheless, quantum scarring may still occur near unstable closed orbits in some chaotic systems \cite{Heller1984}. When studying quantum systems associated with Schr\"odinger operators $H_V=-\Delta_g+V$, the presence of a potential $V$ complicates the behavior of these systems. A fundamental principle is that eigenfunctions are confined by potential barriers, decaying exponentially in classically forbidden regions where the potential energy exceeds the particle's energy level. There are also other types of localization phenomena caused by potentials. For example, Anderson localization \cite{Anderson}, which occurs for certain disordered potentials, is due to destructive interference from scattering. The locations of these localized states have recently been shown to be predictable by landscape functions \cite{FM2012, ADFJM}. Notably,  potentials  with singularities on submanifolds may alter the chaotic nature of the Hamiltonian flow in the potential-free case on hyperbolic manifolds.  In this paper, we aim to quantitatively investigate the concentration behaviors of Schr\"odinger eigenfunctions with  singular potentials.

To measure the concentration of an eigenfunction, an important approach is to study the growth of its $L^p$ norms restricted to submanifolds. This type of estimates was studied by Reznikov \cite{rez04} for Maass forms on hyperbolic surfaces and by Burq--G\'erard--Tzvetkov \cite{BGT},  Hu \cite{Hu} for Laplacian eigenfunctions on general compact manifolds. To be specific, let  $\Sigma\subset M$ be a smoothly embedded submanifold of dimension $k$. For $2\le p\le \infty$, let
\begin{equation}\label{deltavalue}
	\delta(k,p)=\begin{cases}
		\dfrac{n-1}{2}-\dfrac{k}{p}, \quad\quad\quad\quad\text{if}~1\leq k\leq n-2,\ \ 2\leq p\leq\infty,\\
		\dfrac{n-1}{4}-\dfrac{n-2}{2p},  \quad\quad \text{if}~k=n-1,\  2\leq p \le \dfrac{2n}{n-1},\\
		\dfrac{n-1}{2}-\dfrac{n-1}{p}, \quad\quad\text{if}~k=n-1,\  \dfrac{2n}{n-1}< p\leq\infty.
	\end{cases}
\end{equation}
For the Laplacian eigenfunction $e_\la$ with $-\Delta_g e_\la=\la^2e_\la$, Burq--G\'erard--Tzvetkov \cite{BGT} and Hu \cite{Hu} proved the following sharp restriction bounds
\begin{equation}\label{bgt}
	\|e_\lambda\|_{L^p(\Sigma)}\ls\lambda^{{\delta(k,p)}}\|e_\lambda\|_{L^2(M)},
\end{equation}
albeit with a potential $(\log\lambda)^{\frac{1}{2}}$ loss when $(k,p)=(n-2,2)$. The log loss  can be removed under certain geometrical assumptions but remains generally open, see Chen--Sogge \cite{chensogge}, Wang--Zhang \cite{wzadv} for detailed discussions. Related works on restrictions of eigenfunctions also include those by Greenleaf-Seeger \cite{gs1994}, Tataru \cite{tataru}, Bourgain \cite{bo2009}, Tacy \cite{tacy2010}, Bourgain--Rudnick \cite{br2012}, Chen \cite{chen}, Xi--Zhang \cite{xz}, Hezari \cite{hez}, Blair \cite{mb}, Zhang \cite{zhang}, Huang--Zhang \cite{hzapde}, and Park \cite{park}. These types of restriction estimates are related to the stabilization of weakly damped wave equations, as discussed in Burq--Zuily \cite{BZ2016}.

Next, we recall some important classes of potentials in  spectral theory. To ensure that $H_V$ is essentially self-adjoint and bounded from below and its eigenfunctions are bounded, the ``minimal condition'' for $V$ is that it belongs to the Kato class, $\mathcal{K}$. The definition will be given in the next section. The spaces $L^{n/2}$ and $\mathcal{K}$ have the same scaling properties, and both obey the scaling law of the Laplacian, which accounts for their criticality. On the compact manifold $M$, we have  $L^q(M)\subset \mathcal{K}(M)\subset L^1(M)$  for $q>n/2$. The spectrum of $H_V$ for  $V\in \mathcal{K}(M)$ is discrete, and the eigenfunctions  are continuous, which allows for their restriction to submanifolds.  For a detailed introduction to Kato potentials and their physical motivations, see Simon \cite{SimonSurvey} and Blair-Sire-Sogge \cite{BSS}. 

\subsection{Main results}

We shall assume throughout that $M$ is a compact manifold of dimension $n\ge2$, and  $\Sigma$ be a submanifold of dimension $k$, and the potentials $V$ are real-valued. Let $\la\ge10$ and let $e_\la$ be an eigenfunction of $H_V$ with $$H_V e_\la=\la^2e_\la.$$ Let $2\le p\le \infty$ and let $\Lambda = \Lambda(k,p)$ denote the classical bounds  in \eqref{bgt}, i.e.
$$\Lambda(k,p) = \begin{cases}
	\lambda^{\delta(k,p)},   ~~~~\quad\quad\quad\text{ if}~(k,p)\neq(n-2,2) ,\\
	\lambda^{\delta(k,p)}(\log\lambda)^\frac{1}{2},~~~~\text{if}~(k,p)=(n-2,2).
\end{cases}$$
First, we prove essentially sharp estimates for the restriction of Schr\"odinger eigenfunctions to submanifolds of all codimensions.
\begin{theorem}\label{mainthm}
 Let $V\in L^{n/2}(M)\cap \mathcal{K}(M)$.  Then for all $n,k,p$ we have
	\begin{equation}\label{SPVbd}
		\|e_\la\|_{ L^p(\Sigma)}\ls \Lambda\|e_\la\|_{L^2(M)},
	\end{equation}
albeit with a potential logarithmic loss  if $(n,k,p)\in  \{(n,k,p):n\ge 5, k > n/2,p <2.3\}$. The loss can be removed if $V\in L^q(M)$ with $q>n/2$.
\end{theorem}
 
When $V\equiv 0$, the bounds are essentially sharp on the standard sphere. For the restriction to curved hypersurfaces, see the forthcoming Theorem \ref{mainthmcurv} for the improvements. We also obtain improvements on negatively curved manifolds and flat tori. 
\begin{theorem}\label{thmneg}
	Let $M$ be a negatively curved manifold of dimension $n\ge2$. When $n=3$, we assume it has constant negative sectional curvatures. Let $\Sigma\subset M$ be a geodesic segment. Let $V\in L^{n/2}(M)\cap \mathcal{K}(M)$. Then we have for all $2\le p\le \infty$
		\begin{equation}\label{SPVbdneg1}
		\|e_\la\|_{ L^p(\Sigma)}\ls \la^{\delta(1,p)}(\log\la)^{-\frac12}\|e_\la\|_{L^2(M)}.
	\end{equation}
\end{theorem}
This type of improvements for Laplacian eigenfunctions were investigated in a series of works by Sogge-Zelditch \cite{sogzel}($n=2$),  Chen-Sogge \cite{chensogge}($n=2,3$), Chen \cite{chen}($n\ge2$), Xi-Zhang \cite{xz}($n=2$), Blair \cite{mb}($n=2,3$), and  Zhang \cite{zhang}($n=3$). These are related to  the Kakeya-Nikodym tube estimate of eigenfunctions,  see a series of works by Blair and Sogge in \cite{sogge2011,bs1,bs2,bs3,bs4}.

Let $\mathbb{T}^n=\mathbb{R}^n/(2\pi\mathbb{Z}^n)$. Bourgain-Rudnick \cite{br2012} established uniform $L^2$ bounds on the restriction of Laplacian eigenfunctions on the flat tori $\mathbb{T}^2$ and $\mathbb{T}^3$ to curved hypersurfaces. Recently, Huang-Zhang \cite{hzapde} characterized the $L^2$ bounds on the restriction of toral eigenfunctions to totally geodesic submanifolds and obtained uniform bounds for rational hyperplanes. We partially extend  these  results to singular potentials.
\begin{theorem}\label{BRthm}
Let $V\in L^2(\mathbb{T}^2)$. If $\Sigma\subset\mathbb{T}^2$ is a curve segment with nonvanishing geodeisc curvature or a  segment of a closed geodesic, then for all $\la$ we have a uniform  bound 
\begin{equation}\label{tor1}
	\|e_\la\|_{ L^2(\Sigma)}\ls \|e_\la\|_{L^2(\mathbb{T}^2)}.
\end{equation}
 If $\Sigma$ is a geodesic segment, then for all $\la$ we have
\begin{equation}\label{tor2}
	\|e_\la\|_{ L^2(\Sigma)}\ls \sqrt{\log\la} \|e_\la\|_{L^2(\mathbb{T}^2)}.
\end{equation}
\end{theorem}
The conjectural bound for  \eqref{tor2} is a uniform constant, while it is equivalent to  the currently open question of whether on the
circle $|x| = \la$, the number of lattice points on an arc of size $\la^{\frac12}$ admits a uniform
bound. The condition  $V\in L^2$ is natural, and it is  related to the uniform $L^4$  bounds by Cooke \cite{cook} and Zygmund \cite{zyg}. The same condition also appears in  Bourgain-Burq-Zworski \cite{BBZ}.

The bounds of period integrals over submanifolds have been established by Good \cite{good}, Hejhal \cite{Hej}, Zelditch \cite{zeld1992}, Reznikov \cite{rez2015} and Chen-Sogge \cite{chensogge0}, etc. We prove  this type of bounds for singular potentials.

\begin{theorem}\label{mainthm2} Let $V\in L^{n/2}(M)\cap \mathcal{K}(M)$. Then for all $n,k$ we have
	\begin{equation}\label{PVbdper}
	\Big|\int_\Sigma e_\la d\sigma\Big| \ls \la^{\frac{n-k-1}2}\|e_\la\|_{L^2(M)},
	\end{equation}
albeit with a potential logarithmic loss  when $(n,k)\in \{(n,k):n\ge 9, n/2 < k \le n-4\}$. The loss can be removed if $V\in L^q(M)$ with $q>n/2$. Here   $d\sigma$  is the volume measure on $\Sigma$ induced by the Riemannian metric on $M$.
\end{theorem}
When $V\equiv0$,  the bounds are sharp on the sphere. We also obtain improvements on negatively curved manifolds. For instance, we state a two dimensional result.

\begin{theorem}\label{pineg}Let $M$ be a negatively curved surface. Let $\Sigma\subset M$ be any closed curve. Let  $V\in \mathcal{K}(M)$. Then we have
	\begin{equation}\label{PVbdperneg}
		\Big|\int_\Sigma e_\la d\sigma\Big| \ls (\log\la)^{-\frac12}(\log\log\la)\|e_\la\|_{L^2(M)}.
	\end{equation}
	The $\log\log\la$ loss can be removed if $V\in L^q(M)$ with $q>1$.
\end{theorem}
This type of improvements for Laplacian eigenfunctions are due to Sogge-Xi-Zhang \cite{sxz2017}, Wyman \cite{wym2019}, and Canzani--Galkowski \cite{CG21}, while the conjectural bound is $O(\la^{-\frac12+\eps})$ for any $\eps>0$, see Reznikov \cite{rez2015}. The conditions on $M$ may be significantly relaxed; see, e.g., Sogge-Xi-Zhang \cite{sxz2017}, Canzani--Galkowski--Toth \cite{cgt2018}, Canzani--Galkowski \cite{cgduke,CG21}, and Wyman \cite{wym2017,wym2019a,wym2019, wym2020}. 

\subsection{Proof methods}
To obtain eigenfunction bounds for $H_V$, a powerful tool is the second resolvent formula used by Blair-Sire-Sogge \cite{BSS} and Blair-Huang-Sire-Sogge \cite{BHSS}. Recently, Blair-Park \cite{BP,BP2} used this type of perturbative argument  to obtain  estimates for eigenfunctions restricted to  submanifolds of codimension 1 and 2, while higher codimension analogues were open. This  argument focuses on estimating the difference  between the resolvent operators for perturbed and unperturbed cases, where the differences only contribute to the error terms in the
main theorems.  However, a limitation of this method is that it cannot handle submanifolds of all codimensions. Specifically, it is easy to see that $((\la+i)^2+\Delta_g)^{-1}$ is not  $L^2(M)\to L^2(\Sigma)$ bounded when $k\le n-4$.

A natural idea  is to replace the resolvent operator by the spectral projection operator in the perturbative argument, since both of them can reproduce eigenfunctions. However, the new difficulty in the perturbative argument is to handle the  difference between the spectral projection operators for perturbed and unperturbed cases. Indeed, we use the  Duhamel principle for the wave equation to represent the spectral projection operators of $H_V$.   By the spectral theorem, we split the lower and higher frequencies respect to $\la$ in the Fourier expansion. We need to carefully handle several different cases regarding the interactions of different frequency regimes. This is the technical part of our proof.

We give some remarks on  our new method. First,  we use a bootstrap argument involving induction on the dimensions of the submanifolds to utilize the Kato class condition. This is the crucial idea that enables us to handle submanifolds of all codimensions (see Subsection \ref{hcd}). Moreover, an advantage of the method is that it is particularly useful to handle spectral projection operators with small windows. For instance, we can handle the spectral projection on the window $[\la,\la+\la^{-1}]$ to prove Theorem \ref{BRthm} on  flat tori. The method is expected to be useful in more general settings.

\subsection{Paper structure}
The paper is structured as follows. In Section 2, we presents some basic facts and related results we will use in our argument. In Section 3, we presents the main argument for the proof and we deal with higher codimensions in Subsection 3.1. In Section 4, we remove the logarithmic loss in Case 2 and finish the proof of Theorems \ref{mainthm}, \ref{thmneg}, \ref{mainthm2}, \ref{pineg}. In Section 5, we obtain some improvements on curved hypersurfaces. In Section 6, we prove Theorem \ref{BRthm} on the flat tori. In Section 7, we discuss potential improvements on oscillatory integrals.

\subsection{Notations}	Throughout this paper, $X\ls Y$ means $X\le CY$  for some positive constants $C$. If $X\ls Y$ and $Y\ls X$, we denote $X\approx Y$. If $x$ is in a small neighborhood of $x_0$, we denote $x\sim x_0$. 

\noindent\textbf{Acknowledgments.}
The authors would like to thank Nicolas Burq  for  some helpful comments. X.H. is partially supported by NSF DMS-2452860 and the Simons Foundation. X.W. is partially supported by the Fundamental Research Funds for the Central Universities Grant No. 531118010864 from Hunan University. C.Z. is partially supported by National Key R\&D Program of China No. 2024YFA1015300 and NSFC Grant No. 12371097. 
\section{Preliminary}
We shall assume throughout that the potentials $V$ are real-valued and $V\in \mathcal{K}(M)$, which is the Kato class. It is all $V\in L^1(M)$ satisfying  
\[\lim_{\delta\to 0}\sup_{x\in M}\int_{d_g(y,x)<\delta}|V(y)|W_n(d_g(x,y))dy=0,\]
where \[W_n(r)=\begin{cases}r^{2-n},\quad\quad\quad\quad n\ge3\\
	\log(2+r^{-1}),\ \ n=2\end{cases}\]
and $d_g$, $dy$ denote geodesic distance, the volume element on $(M,g)$.
Note that by H\"older inequality, we have $L^{q}\subset \mathcal{K}(M)\subset L^1(M)$ for all $q>\frac n2$. The Kato class $\mathcal{K}(M)$ and $L^{n/2}(M)$ share the same critical scaling
behavior, while neither one is contained in the other one for $n\ge3$. For instance, singularities of the type $|x|^{-\alpha}$ for $\alpha<2$ are allowed for both classes.

The Kato class is the ``minimal condition'' to ensure that $H_V$ is essentially self-adjoint and bounded from below, and  eigenfunctions of $H_V$ are bounded.  Since $M$ is compact, the spectrum of $H_V$ is discrete.  Also, the associated
eigenfunctions are continuous. The following Gaussian heat kernel bounds holds for all $x,y\in M$
\begin{equation}\label{heat}
	|e^{-tH_V}(x,y)|\le C t^{-\frac n2}e^{-cd_g(x,y)^2/t},\ 0<t\le 1.
\end{equation}
The constants $C$ and $c$ are positive, and they can be independent of $V$. See e.g. Sturm \cite{sturm}, Huang-Wang-Zhang \cite{HWZ24} for a detailed proof.

  After possibly adding a constant to $V$ we may assume throughout that $H_V$ is bounded below by one. 
We shall write the spectrum
of $\sqrt{H_V}$  as $\{\tau_k\}_{k=1}^\infty,$
where the eigenvalues are arranged in increasing order and we account for multiplicity.  For each $\tau_k$ there is an 
eigenfunction $e_{\tau_k}\in \text{Dom }(H_V)$ (the domain of $H_V$) so that
\begin{equation}\label{1.6}
	H_Ve_{\tau_k}=\tau^2_ke_{\tau_k},\ \ {\rm and}\  \ \int_M |e_{\tau_k}(x)|^2 \, dx=1.
\end{equation} 
Moreover, we shall let $	H^0=-\Delta_g$
be the unperturbed operator.  The corresponding eigenvalues and associated $L^2$-normalized
eigenfunctions are denoted by $\{\lambda_j\}_{j=1}^\infty$ and $\{e^0_j\}_{j=1}^\infty$, respectively so
that
\begin{equation}\label{1.9}
	H^0e^0_j=\lambda^2_j e^0_j, \quad \text{and }\, \, 
	\int_M |e^0_j(x)|^2 \, dx=1.
\end{equation}
Both $\{e_{\tau_k}\}_{k=1}^\infty$ and $\{e^0_j\}_{j=1}^\infty$ are orthonormal bases for $L^2(M)$. Let $P^0=\sqrt{H^0}$ and $P_V=\sqrt{H_V}$.  Let $\la\ge10$ and let $\1_{I}(\tau)$ be the indicator function of the interval $I$. We can define the spectral projection operator $\1_{I}(P^0)$ and $\1_{I}(P_V)$ by the spectral theorem.

Suppose $V\in \mathcal{K}(M)\cap L^{n/2}(M)$. Blair-Sire-Sogge \cite{BSS} and Blair-Huang-Sire-Sogge \cite{BHSS} established the following sharp $L^p$ bounds 
\begin{equation}\label{BSSLp}
	\|\1_{[\la,\la+1]}(P_V)\|_{L^2(M)\to L^p(M)}\ls\la^{\sigma(p)},
\end{equation} with Sogge's exponent
\begin{equation}\label{sog}
	\sigma(p)=\begin{cases}
		\frac{n-1}2-\frac np,\ \ \ \ \ \ \ q_c\le p\le \infty\\
		\frac{n-1}2(\frac12-\frac1p),\ \ \ \ 2\le p<q_c,
	\end{cases}
\end{equation}
where $q_c=\frac{2n+2}{n-1}$.
This extends Sogge's seminal work \cite{sogge88}. Moreover, for $n\ge3$, if $2\le p\le \frac{2n}{n-4}$ when $n\ge5$ and $2\le p<\infty$ when $n=3,4$, the requirement on potential can be relaxed to $V\in L^{n/2}(M)$. Furthermore, \eqref{BSSLp} implies that for all $\delta\ge1$
\begin{equation}\label{b1}
	\|\1_{[\la,\la+\delta]}(P_V)\|_{L^2(M)\to L^p(M)}\ls \la^{\sigma(p)}\delta^\frac12.
\end{equation}

On negatively curved manifolds, for $\eps=(\log\la)^{-1}$,  we have for all $p\ge q_c$
\begin{equation}\label{BSSLpneg}
	\|\1_{[\la,\la+\eps]}(P_V)\|_{L^2(M)\to L^p(M)}\ls\la^{\sigma(p)}\eps^{\frac12}.
\end{equation}
When $V\equiv0$, it is due to B\'erard \cite{be} ($p=\infty$), Hassell-Tacy \cite{hstc} ($p>q_c$) and Huang Sogge \cite{hs24} ($p=q_c$). It was established for $V\in \mathcal{K}(M)\cap L^{n/2}(M)$ by Blair-Huang-Sire-Sogge \cite{BHSS}. Moreover, \eqref{BSSLpneg} implies that for all $\delta\ge(\log\la)^{-1}$ and  $p\ge q_c$
\begin{equation}\label{b2}
		\|\1_{[\la,\la+\delta]}(P_V)\|_{L^2(M)\to L^p(M)}\ls \la^{\sigma(p)}\delta^\frac12.
\end{equation}

Next, we recall some estimates for the restriction of Laplacian  eigenfunctions to submanifolds. We have 
\begin{equation}
	\|\1_{[\la,\la+1]}(P^0)\|_{L^2(M)\to L^p(\Sigma)}\ls \la^{\delta(k,p)}
\end{equation}
albeit with a potential $(\log\lambda)^{\frac{1}{2}}$ loss when $(k,p)=(n-2,2)$. 

Let $M$ be a negatively curved manifold of dimension $n\ge2$. When $n=3$, we assume it has constant negative sectional curvatures. Let $\Sigma\subset M$ be a geodesic segment.  Then for $\eps=(\log\la)^{-1}$ we have 
\begin{equation}
	\|\1_{[\la,\la+\eps]}(P^0)\|_{ L^2(M)\to L^p(\Sigma)}\ls \la^{\delta(1,p)}(\log\la)^{-\frac12}.
\end{equation}
We remark that when $n=2$ only a $(\log\la)^{-\frac14}$ gain was obtained in  \cite{xz,mb} for non-positively curved manifolds, but it can be improved to  $(\log\la)^{-\frac12}$ on negatively curved manifolds by using Günther's comparison theorem, see \cite[Remark 1]{zhang}. When $n=3$, the constant negative curvatures condition is needed to get improvements, see \cite{chensogge,mb,zhang}. 

For period integrals over submanifolds, we have 
\begin{equation}
	\Big|\int_\Sigma\1_{[\la,\la+1]}(P^0)fd\sigma\Big|\ls \la^{\frac{n-k-1}2}\|f\|_{L^2(M)}.
\end{equation}
Remarkable improvements on negatively curved manifolds were obtained in \cite{chensogge0,sxz2017,wym2017,wym2019,wym2019a,wym2020,cgt2018,cgduke}. In particular, for any closed curve $\Sigma$ on negatively curved surfaces and $\eps=(\log\la)^{-1}$, we have 
\begin{equation}
		\Big|\int_\Sigma\1_{[\la,\la+\eps]}(P^0)fd\sigma\Big|\ls (\log\la)^{-\frac12}\|f\|_{L^2(M)}.
\end{equation}

\section{Main argument}\label{MG}
For $0<\eps\le 1$, let $\1_\la=\1_{[\la-\eps,\la+\eps]}$, 
$\1_{\la,\ell}(s)=\1_{|s-\la|\in (2^\ell,2^{\ell+1}]}(s)$, $\1_{\le2\la}=\1_{(-\infty,2\la]}$. To prove Theorem \ref{mainthm} and Theorem \ref{mainthm2}, we fix $\eps=1$. To prove Theorem \ref{thmneg} and Theorem \ref{pineg}, we fix $\eps=(\log\la)^{-1}$.  We shall use the argument in this section to prove these theorems. 

In the following, we denote  $p_c=\frac{2n}{n-1}$ when $k=n-1$ and $p_c=2$ when $k\le n-2$. These are the endpoints in the restriction bounds. We mainly focus on the endpoint estimates at $p=p_c$, as one can easily obtain other estimates by the same argument or by interpolation.	 Note that $p=2$ is also an endpoint when $k=n-1$, and we shall handle it independently in our proof. Let $\|f\|_X$ be the norms $\|f\|_{L^{p_c}(\Sigma)}$, $\|f\|_{L^2(\Sigma)}$ or the semi-norm $|\int_\Sigma fd\sigma|$. Suppose 
\begin{equation}\label{base}
	\|\1_\la(P^0)\|_{L^{2}(M)\to X}\ls A.
\end{equation}
Our goal is to prove
\begin{equation}\label{goal}
	\|\1_\la(P_V)\|_{L^{2}(M)\to X}\ls A,
\end{equation}
	except for some log loss in certain cases.
 We use the short notation $A$ in many cases where we do not need its explicit form, and the parameters are fixed.  
 
 Fix a nonnegative   function 
 $\chi \in C_0^\infty(\mathbb{R})$, such that  supp${\chi} \subset (-\eps,\eps)$. Let $\la\ge10$. Let $\chi_\la(s)=\chi(\la-s)$. By \eqref{base}, we have $$\|\chi_\la(P^0)\|_{L^{2}(M)\to X}\ls A.$$
 For $s\ge0$, we have $\chi(\la+s)=0$, and then
 \[\chi(\la-s)=\chi(\la-s)+\chi(\la+s)=\frac1{\pi}\int \hat\chi(t)e^{it\la}\cos (ts)dt.\]
 By Duhamel's principle and the spectral theorem, we can calculate the difference between the wave kernel and its perturbation as in \cite{hsweyl}, \cite{hz2021}, \cite{hz2023}
 \begin{align*}\cos tP_V(x,y)&-\cos tP^0(x,y)\\
 	&=-\sum_{\la_j}\sum_{\tau_k}\int_M\int_0^t\frac{\sin(t-s)\la_j}{\la_j}\cos s\tau_k\  e_j^0(x)e_j^0(z)e_{\tau_k}(z)e_{\tau_k}(y)V(z)dzds\\
 	&=\sum_{\la_j}\sum_{\tau_k}\int_M\frac{\cos t\la_j-\cos t\tau_k}{\la_j^2-\tau_k^2} e_j^0(x)e_j^0(z)e_{\tau_k}(z)e_{\tau_k}(y)V(z)dz.\end{align*}
 So we have
 \[\chi_\la(P_V)(x,y)-\chi_\la(P^0)(x,y)=\sum_{\la_j}\sum_{\tau_k}\int_M\frac{\chi_\la(\la_j)-\chi_\la(\tau_k)}{\la_j^2-\tau_k^2} e_j^0(x)e_j^0(z)e_{\tau_k}(z)e_{\tau_k}(y)V(z)dz\]
 Let $m(\la_j,\tau_k)=\frac{\chi_\la(\la_j)-\chi_\la(\tau_k)}{\la_j^2-\tau_k^2}$. 
 It suffices to estimate the $L^2(M)\to X$ bound of the operator associated with the kernel
 \begin{equation}\label{mainest}K(x,y)=\sum_{\la_j}\sum_{\tau_k}\int_Mm(\la_j,\tau_k) e_j^0(x)e_j^0(z)e_{\tau_k}(z)e_{\tau_k}(y)V(z)dz.\end{equation}

 By the support property of $m(\la_j,\tau_k)$, we need to consider five cases.

\begin{enumerate}
	\item $|\tau_k-\la|\le \eps$, $|\la_j-\la|\le \eps$.
	\item $|\tau_k-\la|\le \eps$, $|\la_j-\la|\in(2^\ell,2^{\ell+1}]$, $\eps\le 2^\ell\le \la$.
	\item $|\la_j-\la|\le\eps$, $|\tau_k-\la|\in(2^\ell,2^{\ell+1}]$, $\eps\le 2^\ell\le \la$.
	\item $|\la_j-\la|\le \eps$, $\tau_k>2\la$.
	\item $|\tau_k-\la|\le \eps$, $\la_j>2\la$.
	
\end{enumerate}
 
 Cases 1, 3, 4 are relatively straightforward and their contributions are $O(A)$ as desired. Case 2 will give a log loss for critical potentials, but we shall remove it in the next section by the resolvent method. Case 5 is more involved and we shall use a bootstrap argument involving an induction on the dimensions of the submanifolds.

In the following, we fix $q= \frac n2$, and fix $\frac1{p_0}\in [\frac{n+3}{2n+2}-\frac 2n,\frac{n-1}{2n+2}]$ and $\frac1{q_0}=\frac 1{p_0'}-\frac 1q$ when $n\ge3$, and $p_0=q_0=\infty$ when $n=2$. So we always have $q_0\ge\frac{2n+2}{n-1}$ and then  $\sigma(p_0)+\sigma(q_0)=1$. In general, for $n\ge3$ we will see that $\frac1{p_0}=\frac{n+3}{2n+2}-\frac2n$ and $q_0=\frac{2n+2}{n-1}$ is the best choice.

$ $

\noindent	\textbf{Case 1.}  $|\tau_k-\la|\le \eps$, $|\la_j-\la|\le \eps$.
	
	In this case, for $|s-\la|\le\eps$ we have
	\[|m(\la_j,s)|+\eps|\partial_s m(\la_j,s)|\ls (\la\eps)^{-1}.\]Then 
	\begin{align*}&\sum_{|\la_j-\la|\le\eps}\sum_{|\tau_k-\la|\le \eps}\int_Mm(\la_j,\tau_k)e_j^0(x)e_j^0(z)e_{\tau_k}(z)e_{\tau_k}(y)V(z)dz\\
	&= \sum_{|\la_j-\la|\le\eps}\sum_{|\tau_k-\la|\le \eps}\int_M\int_{\la-\eps}^{\la+\eps}\partial_sm(\la_j,s)\1_{[\la-\eps,\tau_k]}(s)e_j^0(x)e_j^0(z)e_{\tau_k}(z)e_{\tau_k}(y)V(z)dzds\\
	&\ \ \ +\sum_{|\la_j-\la|\le\eps}\sum_{|\tau_k-\la|\le \eps}\int_Mm(\la_j,\la-\eps)e_j^0(x)e_j^0(z)e_{\tau_k}(z)e_{\tau_k}(y)V(z)dz\\
	&=K_1(x,y)+K_2(x,y).
\end{align*}

We handle $K_2$ first. For any $f\in L^{2}(M)$, we have
\begin{align*}
	\|K_2f\|_{X}&=\|\1_\la(P^0)m(P^0,\la-\eps)(V\cdot \1_\la(P_V)f)\|_{X}\\
	&\ls A\|\1_\la(P^0)m(P^0,\la-\eps)(V\cdot \1_\la(P_V)f)\|_{L^2}\\
	&\ls A(\la\eps)^{-1}\|\1_\la(P^0)(V\cdot \1_\la(P_V)f)\|_{L^2}\\
	&\ls A(\la\eps)^{-1}\la^{\sigma(p_0)}\eps^{1/2}\|V\cdot\1_\la(P_V)f\|_{L^{p_0'}}\\
	&\ls A(\la\eps)^{-1}\la^{\sigma(p_0)}\eps^{1/2}\|V\|_{L^q}\|\1_\la(P_V)f\|_{L^{q_0}}\\
	&\ls A(\la\eps)^{-1}\la^{\sigma(p_0)}\eps^{1/2}\la^{\sigma(p_0)}\eps^{1/2}\|V\|_{L^q}\|f\|_{L^{2}}\\
	&=A\|V\|_{L^q}\|f\|_{L^{2}}.
\end{align*}
In the first inequality we used the identity $\1_\la(P^0)\circ\1_\la(P^0) = \1_\la(P^0)$, which will be used again later without further explanation. We also used \eqref{BSSLp} or \eqref{BSSLpneg} in the fourth and sixth inequalities. The approach for $K_1$ is similar.

$ $
		
\noindent \textbf{Case 2.}  $|\tau_k-\la|\le \eps$, $|\la_j-\la|\in(2^\ell,2^{\ell+1}]$, $\eps\le 2^\ell\le \la$.

Let $\psi\in C_0^\infty(\mathbb{R})$ satisfy $\psi(t)=1$ if $|t|\le  2$ and $\psi(t)=0$ if  $|t|>3$. We split the  $\la_j$-frequencies by the cutoff function $\psi(\la_j/\la)$. When $|\la_j-\la| \in(2^\ell,2^{\ell+1}]$, we have $m(\la_j,\tau_k)=\frac{-\chi_\la(\tau_k)}{\la_j^2-\tau_k^2}\psi(\la_j/\la)$, and for $|s-\la|\le \eps$ 
\[|m(\la_j,s)|+\eps|\partial_s m(\la_j,s)|\ls \la^{-1}2^{-\ell}.\]
We can use the same argument as Case 1 to handle 
\begin{align*}&\sum_{|\la_j-\la|\in(2^\ell,2^{\ell+1}]}\sum_{|\tau_k-\la|\le\eps}\int_Mm(\la_j,\tau_k)e_j^0(x)e_j^0(z)e_{\tau_k}(z)e_{\tau_k}(y)V(z)dz\\
	&= \sum_{|\la_j-\la|\in(2^\ell,2^{\ell+1}]}\sum_{|\tau_k-\la|\le\eps}\int_M\int_{\la-\eps}^{\la+\eps}\partial_sm(\la_j,s)\1_{[\la-\eps,\tau_k]}(s)e_j^0(x)e_j^0(z)e_{\tau_k}(z)e_{\tau_k}(y)V(z)dzds\\
	&\ \ \ +\sum_{|\la_j-\la|\in(2^\ell,2^{\ell+1}]}\sum_{|\tau_k-\la|\le\eps}\int_Mm(\la_j,\la-\eps)e_j^0(x)e_j^0(z)e_{\tau_k}(z)e_{\tau_k}(y)V(z)dz\\
	&=K_{1,\ell}(x,y)+K_{2,\ell}(x,y).\end{align*}

We handle $K_{2,\ell}$ first. For any $f\in L^{2}(M)$, by \eqref{b1} and \eqref{b2} we have
\begin{align*}
	\|K_{2,\ell}f\|_{X}&=\|\1_{\la,\ell}(P^0)m(P^0,\la-\eps)(V\cdot \1_\la(P_V)f)\|_{X}\\
	&\ls A(2^\ell/\eps)^{1/2}\|\1_{\la,\ell}(P^0)m(P^0,\la-\eps)(V\cdot \1_\la(P_V)f)\|_{L^2}\\
	&\ls A(2^\ell/\eps)^{1/2}\la^{-1}2^{-\ell}\|\1_{\la,\ell}(P^0)(V\cdot \1_\la(P_V)f)\|_{L^2}\\
	&\ls A(2^\ell/\eps)^{1/2}\la^{-1}2^{-\ell}\la^{\sigma(p_0)}2^{\ell/2}\|V\cdot\1_\la(P_V)f\|_{L^{p_0'}}\\
	&\ls A(2^\ell/\eps)^{1/2}\la^{-1}2^{-\ell}\la^{\sigma(p_0)}2^{\ell/2}\|V\|_{L^q}\|\1_\la(P_V)f\|_{L^{q_0}}\\
	&\ls A(2^\ell/\eps)^{1/2}\la^{-1}2^{-\ell}\la^{\sigma(p_0)}2^{\ell/2}\la^{\sigma(q_0)}\eps^{1/2}\|V\|_{L^q}\|f\|_{L^{2}}\\
		&=A\|V\|_{L^q}\|f\|_{L^{2}}.
\end{align*}
The method to handle $K_{1,\ell}$ is similar. Summing over $\ell$ gives the bound $A\log\la$. We shall remove the log loss in the next section by resolvent method. Clearly, the log loss can be removed if we assume $q>n/2$ in the argument above.

$ $

\noindent \textbf{Case 3.}  $|\la_j-\la|\le\eps$, $|\tau_k-\la|\in(2^\ell,2^{\ell+1}]$, $\eps\le 2^\ell\le \la$.

In this case,  $m(\la_j,\tau_k)=\frac{\chi_\la(\la_j)}{\la_j^2-\tau_k^2}$, and for $|s-\la|\le \eps$ we have
\[|m(s,\tau_k)|+\eps|\partial_s m(s,\tau_k)|\ls \la^{-1}2^{-\ell}.\]
We can use the same argument as Case 1 to handle 
\begin{align*}&\sum_{|\tau_k-\la|\in(2^\ell,2^{\ell+1}]}\sum_{|\la_j-\la|\le\eps}\int_Mm(\la_j,\tau_k)e_j^0(x)e_j^0(z)e_{\tau_k}(z)e_{\tau_k}(y)V(z)dz\\
	&= \sum_{|\tau_k-\la|\in(2^\ell,2^{\ell+1}]}\sum_{|\la_j-\la|\le\eps}\int_M\int_{\la-\eps}^{\la+\eps}\partial_sm(s,\tau_k)\1_{[\la-\eps,\la_j]}(s)e_j^0(x)e_j^0(z)e_{\tau_k}(z)e_{\tau_k}(y)V(z)dzds\\
	&\ \ \ +\sum_{|\tau_k-\la|\in(2^\ell,2^{\ell+1}]}\sum_{|\la_j-\la|\le\eps}\int_Mm(\la-\eps,\tau_k)e_j^0(x)e_j^0(z)e_{\tau_k}(z)e_{\tau_k}(y)V(z)dz\\
	&=K_{1,\ell}(x,y)+K_{2,\ell}(x,y).\end{align*}

We handle $K_{2,\ell}$ first. For any $f\in L^{2}(M)$, we have
\begin{align*}
	\|K_{2,\ell}f\|_{X}&=\|\1_{\la}(P^0)m(\la-\eps,P^0)(V\cdot \1_{\la,\ell}(P_V)f)\|_{X}\\
	&\ls A\|\1_{\la}(P^0)m(\la-\eps,P^0)(V\cdot \1_{\la,\ell}(P_V)f)\|_{L^2}\\
	&\ls A\la^{-1}2^{-\ell}\|\1_{\la}(P^0)(V\cdot \1_{\la,\ell}(P_V)f)\|_{L^2}\\
	&\ls A\la^{-1}2^{-\ell}\la^{\sigma(p_0)}\eps^{1/2}\|V\cdot\1_{\la,\ell}(P_V)f\|_{L^{p_0'}}\\
	&\ls A\la^{-1}2^{-\ell}\la^{\sigma(p_0)}\eps^{1/2}\|V\|_{L^q}\|\1_{\la,\ell}(P_V)f\|_{L^{q_0}}\\
	&\ls A\la^{-1}2^{-\ell}\la^{\sigma(p_0)}\eps^{1/2}\la^{\sigma(q_0)}2^{\ell/2}\|V\|_{L^q}\|f\|_{L^{2}}\\
	&=A\eps^{1/2}2^{-\ell/2}\|V\|_{L^q}\|f\|_{L^{2}}.
\end{align*}
The method to handle $K_{1,\ell}$ is similar. Summing over $\ell$ gives the desired bound $A$.

$ $

\noindent \textbf{Case 4.} $|\la_j-\la|\le \eps$, $\tau_k>2\la$.

In this case, $m(\la_j,\tau_k)=\frac{\chi_\la(\la_j)}{\la_j^2-\tau_k^2}$. 

We write
\begin{align*}
	\frac{\chi_\lambda(\la_j)}{\tau_k^2-\lambda_j^2} & =\int_0^{\infty} \chi_\lambda(\la_j) e^{-t(\tau_k^2-\lambda_j^2)} d t \\
	& =\int_0^{\lambda^{-2}} \chi_\lambda(\la_j) e^{-t(\tau_k^2-\lambda_j^2)} d t+\frac{\chi_\lambda(\la_j) e^{-\lambda^{-2}(\tau_k^2-\lambda_j^2)}}{\tau_k^2-\lambda_j^2}\\
	&= m_1(\la_j,\tau_k)+ m_2(\la_j,\tau_k).
\end{align*}
We first handle $m_2$. Split the interval $(2\la,\infty)=\cup_{\ell=1}^\infty I_\ell$ with  $I_\ell=(2^\ell \la,2^{\ell+1}\la]$.
For $\tau_k\in I_\ell$ and $|s-\la|\le\eps$, we have
\[| m_2(s,\tau_k)|+\eps|\partial_s  m_2(s,\tau_k)|\ls \la^{-2}2^{-N\ell}.\]
Then we can use the same argument as Case 1 to obtain
\begin{align*}&\sum_{|\la_j-\la|\le\eps}\sum_{\tau_k\in I_\ell}\int_M m_2(\la_j,\tau_k)e_j^0(x)e_j^0(z)e_{\tau_k}(z)e_{\tau_k}(y)V(z)dz\\
	&= \sum_{|\la_j-\la|\le\eps}\sum_{\tau_k\in I_\ell}\int_M\int_{\la-\eps}^{\la+\eps}\partial_sm_2(s,\tau_k)\1_{[\la-\eps,\la_j]}(s)e_j^0(x)e_j^0(z)e_{\tau_k}(z)e_{\tau_k}(y)V(z)dzds\\
	&\ \ \ +\sum_{|\la_j-\la|\le\eps}\sum_{\tau_k\in I_\ell}\int_Mm_2(\la-\eps,\tau_k)e_j^0(x)e_j^0(z)e_{\tau_k}(z)e_{\tau_k}(y)V(z)dz\\
	&=K_{1,\ell}(x,y)+K_{2,\ell}(x,y).\end{align*}

We handle $K_{2,\ell}$ first. For any $f\in L^{2}(M)$, we have
\begin{align*}
	\|K_{2,\ell}f\|_{X}&=\|\1_{\la}(P^0)m_2(P^0,\la-\eps)(V\cdot \1_{I_\ell}(P_V)f)\|_{X}\\
	&\ls A\|\1_{\la}(P^0)m_2(P^0,\la-\eps)(V\cdot \1_{I_\ell}(P_V)f)\|_{L^2}\\
	&\ls A\la^{-2}2^{-N\ell}\|\1_{\la}(P^0)(V\cdot \1_{I_\ell}(P_V)f)\|_{L^2}\\
	&\ls A\la^{-2}2^{-N\ell}\la^{\sigma(p_0)}\eps^{1/2}\|V\cdot\1_{I_\ell}(P_V)f\|_{L^{p_0'}}\\
	&\ls A\la^{-2}2^{-N\ell}\la^{\sigma(p_0)}\eps^{1/2}\|V\|_{L^q}\|\1_{I_\ell}(P_V)f\|_{L^{q_0}}\\
	&\ls A\la^{-2}2^{-N\ell}\la^{\sigma(p_0)}\eps^{1/2}(\la2^\ell)^{\sigma(q_0)}(\la2^\ell)^{1/2}\|V\|_{L^q}\|f\|_{L^{2}}\\
	&=A\la^{-\frac12}2^{-N'\ell}\eps^{1/2}\|V\|_{L^q}\|f\|_{L^{2}}.
\end{align*}
The method to handle $K_{1,\ell}$ is similar.

Next we handle $m_1$.	As before, we split the sum $\sum_{\tau_k>2\la}$ into the difference of the complete sum 
\begin{equation}\label{compsum}\sum_{|\la_j-\la|\le\eps}\sum_{\tau_k}\int_M m_1(\la_j,\tau_k)e_j^0(x)e_j^0(z)V(z)e_{\tau_k}(z)e_{\tau_k}(y)dz\end{equation}
and the  partial sum
\begin{equation}\label{partsum}\sum_{|\la_j-\la|\le\eps}\sum_{\tau_k\le 2\la}\int_M m_1(\la_j,\tau_k)e_j^0(x)e_j^0(z)V(z)e_{\tau_k}(z)e_{\tau_k}(y)dz.\end{equation} We first handle the partial sum. When $\tau_k\le 2\la$ and $|s-\la|\le\eps$, we have
\[| m_1(s,\tau_k)|+\eps|\partial_s  m_1(s,\tau_k)|\ls \la^{-2}.\]
Then we can use the same argument as Case 1 to handle
\begin{align*}&\sum_{|\la_j-\la|\le\eps}\sum_{\tau_k\le 2\la}\int_M m_1(\la_j,\tau_k)e_j^0(x)e_j^0(y)e_{\tau_k}(x)e_{\tau_k}(y)V(y)dy\\
	&= \sum_{|\la_j-\la|\le\eps}\sum_{\tau_k\le 2\la}\int_M\int_{\la-\eps}^{\la+\eps}\partial_sm_1(s,\tau_k)\1_{[\la-1,\la_j]}(s)e_j^0(x)e_j^0(z)e_{\tau_k}(z)e_{\tau_k}(y)V(z)dzds\\
	&\ \ \ +\sum_{|\la_j-\la|\le\eps}\sum_{\tau_k\le 2\la}\int_Mm_1(\la-\eps,\tau_k)e_j^0(x)e_j^0(z)e_{\tau_k}(z)e_{\tau_k}(y)V(z)dz\\
	&=K_1(x,y)+K_2(x,y).
\end{align*}

We handle $K_2$ first. For any $f\in L^{2}(M)$, we have
\begin{align*}
	\|K_2f\|_{X}&=\|\1_\la(P^0)(V\cdot \1_{\le2\la}(P_V)m_1(\la-\eps,P_V)f)\|_{X}\\
	&\ls A\|\1_\la(P^0)(V\cdot \1_{\le2\la}(P_V)m_1(\la-\eps,P_V)f)\|_{L^2}\\
	&\ls A\la^{\sigma(p_0)}\eps^{1/2}\|V\cdot \1_{\le2\la}(P_V)m_1(\la-\eps,P_V)f\|_{L^{p'_0}}\\
	&\ls A\la^{\sigma(p_0)}\eps^{1/2}\|V\|_{L^q}\|\1_{\le2\la}(P_V)m_1(\la-\eps,P_V)f\|_{L^{q_0}}\\
	&\ls A\la^{\sigma(p_0)}\eps^{1/2}\la^{\sigma(q_0)}\la^{1/2}\|V\|_{L^q}\|m_1(\la-\eps,P_V)f\|_{L^{2}}\\
	&\ls A\la^{\sigma(p_0)}\eps^{1/2}\la^{\sigma(q_0)}\la^{1/2}\la^{-2}\|V\|_{L^q}\|f\|_{L^{2}}\\
	&=A\la^{-\frac12}\eps^{1/2}\|V\|_{L^q}\|f\|_{L^{2}}.
\end{align*}
 The method to handle $K_1$ is similar.

To handle the complete sum \eqref{compsum}, we need the heat kernel bounds \begin{equation}\label{heatV}
	\|e^{-tH_V}\|_{L^p(M)\to L^q(M)}\ls t^{-\frac n2(\frac1p-\frac1q)},\ \ \text{if}\ 0<t\le1\ \text{and}\ 1\le p\le q\le \infty.
\end{equation}
This follows from \eqref{heat} and Young's inequality.
Then we have
\begin{align*}
	\int_0^{\la^{-2}}\|\1_\la(P^0)e^{-t\Delta_g}(V\cdot e^{-tH_V}f)\|_{X}dt&\ls A \int_0^{\la^{-2}}\|\1_\la(P^0)(V\cdot e^{-tH_V}f)\|_{L^{2}}dt\\
	&\ls A\la^{\sigma(p_0)}\eps^{1/2} \int_0^{\la^{-2}}\|V\cdot e^{-tH_V}f\|_{L^{p_0'}}dt\\
	&\ls A\la^{\sigma(p_0)}\eps^{1/2}\|V\|_{L^q} \int_0^{\la^{-2}}\| e^{-tH_V}f\|_{L^{q_0}}dt\\
	&\ls A\la^{\sigma(p_0)}\eps^{1/2}\|V\|_{L^q}\|f\|_{L^{2}} \int_0^{\la^{-2}}t^{-\frac n2(\frac1{2}-\frac1{q_0})}dt\\
	&\ls A\la^{\sigma(p_0)}\eps^{1/2}\|V\|_{L^q}\|f\|_{L^{2}}\la^{-2+n(\frac12-\frac1{q_0})}\\
	&= A\la^{-\frac12}\eps^{1/2}\|V\|_{L^q}\|f\|_{L^{2}}.
\end{align*}

$ $

\noindent \textbf{Case 5.} $|\tau_k-\la|\le \eps$, $\la_j>2\la$.

We first deal with the case $k > n-4$.
Recall that in Case 2, we split the frequencies by the cutoff function $\psi\in C_0^\infty(\mathbb{R})$ satisfying $\psi(t)=1$ if $|t|\le  2$ and $\psi(t)=0$ if  $|t|>3$. So now we need to deal with $m(\la_j,\tau_k)=\frac{-\chi_\la(\tau_k)}{\la_j^2-\tau_k^2}(1-\psi(\la_j/\la))$. 

We write
\begin{align*}
	\frac{\chi_\lambda(\tau_k)}{\lambda_j^2-\tau_k^2} & =\int_0^{\infty} \chi_\lambda(\tau_k) e^{-t(\lambda_j^2-\tau_k^2)} d t \\
	& =\int_0^{\lambda^{-2}} \chi_\lambda(\tau_k) e^{-t(\lambda_j^2-\tau_k^2)} d t+\frac{\chi_\lambda(\tau_k) e^{-\lambda^{-2}(\lambda_j^2-\tau_k^2)}}{\lambda_j^2-\tau_k^2}\\
	&:= m_1(\la_j,\tau_k)+ m_2(\la_j,\tau_k).
\end{align*}
We first handle $m_2$. Split the interval $(2\la,\infty)=\cup_{\ell=1}^\infty I_\ell$ with  $I_\ell=(2^\ell \la,2^{\ell+1}\la]$.
For $\la_j\in I_\ell$ and $|s-\la|\le\eps$, we have
\[| m_2(\la_j,s)|+\eps|\partial_s  m_2(\la_j,s)|\ls \la^{-2}2^{-N\ell},\ \  \forall N.\]
Then we can use the same argument as Case 1 to obtain
\begin{align*}&\sum_{|\tau_k-\la|\le\eps}\sum_{\la_j\in I_\ell}\int_M m_2(\la_j,\tau_k)e_j^0(x)e_j^0(z)e_{\tau_k}(z)e_{\tau_k}(y)V(z)dz\\
	&= \sum_{|\tau_k-\la|\le\eps}\sum_{\la_j\in I_\ell}\int_M\int_{\la-\eps}^{\la+\eps}\partial_sm_2(\la_j,s)\1_{[\la-\eps,\tau_k]}(s)e_j^0(x)e_j^0(z)e_{\tau_k}(z)e_{\tau_k}(y)V(z)dzds\\
	&\ \ \ +\sum_{|\tau_k-\la|\le\eps}\sum_{\la_j\in I_\ell}\int_Mm_2(\la_j,\la-\eps)e_j^0(x)e_j^0(z)e_{\tau_k}(z)e_{\tau_k}(y)V(z)dz\\
	&=K_{1,\ell}(x,y)+K_{2,\ell}(x,y).\end{align*}
We handle $K_{2,\ell}$ first. By \eqref{b1} we have
\[\|\1_{I_\ell}(P^0)\|_{L^2(M)\to X}\ls A 2^{\alpha\ell}(\la2^\ell)^\frac12,\]
where $\alpha=\delta(k,p_c)$ if $\|f\|_X=\|f\|_{L^{p_c}(\Sigma)}$,  and $\alpha=\delta(k,2)$ if $\|f\|_X=\|f\|_{L^2(\Sigma)}$ or  $|\int_\Sigma fd\sigma|$. 
For any $f\in L^{2}(M)$, we have
\begin{align*}
	\|K_{2,\ell}f\|_{X}&=\|\1_{I_\ell}(P^0)m_2(P^0,\la-1)(V\cdot \1_\la(P_V)f)\|_{X}\\
	&\ls A 2^{\alpha\ell}(\la2^\ell)^\frac12\|\1_{I_\ell}(P^0) m_2(P^0,\la-1)(V\cdot \1_\la(P_V)f)\|_{L^2}\\
	&\ls A 2^{\alpha\ell}(\la2^\ell)^\frac12\la^{-2}2^{-N\ell}\|\1_{I_\ell}(P^0)(V\cdot \1_\la(P_V)f)\|_{L^2}\\
	&\ls A 2^{\alpha\ell}(\la2^\ell)^\frac12 \la^{-2}2^{-N\ell}(\la 2^\ell)^{\sigma(p_0)+\frac12}\|V\cdot\1_\la(P_V)f\|_{L^{p_0'}}\\
	&\ls A 2^{\alpha\ell}(\la2^\ell)^\frac12 \la^{-2}2^{-N\ell}(\la 2^\ell)^{\sigma(p_0)+\frac12}\|V\|_{L^q}\|\1_\la(P_V)f\|_{L^{q_0}}\\
	&\ls A 2^{\alpha\ell}(\la2^\ell)^\frac12 \la^{-2}2^{-N\ell}(\la 2^\ell)^{\sigma(p_0)+\frac12}\la^{\sigma(q_0)}\eps^{1/2}\|V\|_{L^q}\|f\|_{L^{2}}\\
	&=A2^{-N_1\ell}\eps^{1/2}\|V\|_{L^q}\|f\|_{L^{2}}.
\end{align*}
The method to handle $K_{1,\ell}$ is similar.

Next, we handle $ m_1$.	 We split the sum $\sum_{\la_j>2\la}$ into the difference of the complete sum \begin{equation}\label{compsum0}\sum_{|\tau_k-\la|\le\eps}\sum_{\la_j}\int_M m_1(\la_j,\tau_k)e_j^0(x)e_j^0(z)e_{\tau_k}(z)e_{\tau_k}(y)V(z)dz\end{equation}
and the  partial sum
\begin{equation}\label{partsum0}\sum_{|\tau_k-\la|\le\eps}\sum_{\la_j\le 2\la}\int_M m_1(\la_j,\tau_k)e_j^0(x)e_j^0(z)e_{\tau_k}(z)e_{\tau_k}(y)V(z)dz.\end{equation} We first handle the partial sum. When $\la_j\le 2\la$ and $|s-\la|\le\eps$, we have
\[| m_1(\la_j,s)|+\eps|\partial_s  m_1(\la_j,s)|\ls \la^{-2}.\]
Then we can use the same argument as Case 1 to handle
\begin{align*}&\sum_{|\tau_k-\la|\le\eps}\sum_{\la_j\le 2\la}\int_M m_1(\la_j,\tau_k)e_j^0(x)e_j^0(y)e_{\tau_k}(x)e_{\tau_k}(y)V(y)dy\\
	&= \sum_{|\tau_k-\la|\le\eps}\sum_{\la_j\le 2\la}\int_M\int_{\la-\eps}^{\la+\eps}\partial_sm_1(\la_j,s)\1_{[\la-\eps,\tau_k]}(s)e_j^0(x)e_j^0(z)e_{\tau_k}(z)e_{\tau_k}(y)V(z)dzds\\
	&\ \ \ +\sum_{|\tau_k-\la|\le\eps}\sum_{\la_j\le 2\la}\int_Mm_1(\la_j,\la-\eps)e_j^0(x)e_j^0(z)e_{\tau_k}(z)e_{\tau_k}(y)V(z)dz\\
	&=K_1(x,y)+K_2(x,y).
\end{align*}
We handle $K_2$ first. For any $f\in L^{2}(M)$, we have
\begin{align*}
	\|K_2f\|_X&=\|\1_{\le 2\la}(P^0)m_1(P^0,\la-\eps)(V\cdot \1_\la(P_V)f)\|_{X}\\
	&\ls A\la^{1/2}\|\1_{\le 2\la}(P^0)m_1(P^0,\la-\eps)(V\cdot \1_\la(P_V)f)\|_{L^2}\\
	&\ls A\la^{1/2}\la^{-2}\|\1_{\le 2\la}(P^0)(V\cdot \1_\la(P_V)f)\|_{L^2}\\
	&\ls A\la^{1/2}\la^{-2}\la^{\sigma(p_0)+\frac12}\|V\cdot\1_\la(P_V)f\|_{L^{p_0'}}\\
	&\ls A\la^{1/2}\la^{-2}\la^{\sigma(p_0)+\frac12}\|V\|_{L^q}\|\1_\la(P_V)f\|_{L^{q_0}}\\
	&\ls A\la^{1/2}\la^{-2}\la^{\sigma(p_0)+\frac12}\la^{\sigma(q_0)}\eps^{1/2}\|V\|_{L^q}\|f\|_{L^{2}}\\	&=A\eps^{1/2}\|V\|_{L^q}\|f\|_{L^{2}}.
\end{align*}
The method to handle $K_1$ is similar.

To handle the complete sum \eqref{compsum0}, we  use the heat kernel Gaussian bounds to calculate the kernel of $m_1(P^0,s)$ with $|s-\la|\le\eps$
\begin{align*}|\sum_{\la_j} m_1(\la_j,s)e_j^0(x)e_j^0(y)|&\ls \int_0^{\la^{-2}}|\sum_{\la_j}e^{-t\la_j^2}e_j^0(x)e_j^0(y)|dt\\ \nonumber
	&\ls \int_0^{\la^{-2}} t^{-\frac n2}e^{-cd_g(x,y)^2/t}dt\\ \nonumber
	&\ls \begin{cases}
		\log(2+(\la d_g(x,y))^{-1})(1+\la d_g(x,y))^{-N},\quad n=2\\\nonumber
		d_g(x,y)^{2-n}(1+\la d_g(x,y))^{-N},\quad\quad\quad\quad\quad\ \  n\ge3
	\end{cases}\\
	&\ls W_n(d_g(x,y))(1+\la d_g(x,y))^{-N},\ \ \ \ \forall N.
\end{align*}
Similarly, for $|s-\la|\le\eps$ we also have \[|\partial_s m_1(P^0,s)(x,y)|\ls W_n( d_g(x,y))(1+\la d_g(x,y))^{-N},\ \ \ \ \forall N.\]
We write
\begin{align*}
	&\sum_{|\tau_k-\la|\le\eps}\sum_{\la_j}\int_M m_1(\la_j,\tau_k)e_j^0(x)e_j^0(z)e_{\tau_k}(z)e_{\tau_k}(y)V(z)dz\\
	&= \sum_{|\tau_k-\la|\le\eps}\sum_{\la_j}\int_M\int_{\la-\eps}^{\la+\eps}\partial_sm_1(\la_j,s)\1_{[\la-\eps,\tau_k]}(s)e_j^0(x)e_j^0(z)e_{\tau_k}(z)e_{\tau_k}(y)V(z)dzds\\
	&\ \ \ +\sum_{|\tau_k-\la|\le1}\sum_{\la_j}\int_Mm_1(\la_j,\la-\eps)e_j^0(x)e_j^0(z)e_{\tau_k}(z)e_{\tau_k}(y)V(z)dz\\
	&=K_{1}(x,y)+K_{2}(x,y).
\end{align*}
We only handle $K_2$, and $K_1$ can be handled similarly.  

Let $p_2=p_c$ if $\|f\|_X=\|f\|_{L^{p_c}(\Sigma)}$,  and $p_2=2$ if $\|f\|_X=\|f\|_{L^2(\Sigma)}$ or  $|\int_\Sigma fd\sigma|$. By Young's inequality, we obtain
\begin{align*}
	\|m_1(P^0,\la-\eps)(V\cdot \1_\la(P_V)f)\|_{X}&\ls \|m_1(P^0,\la-\eps)(V\cdot \1_\la(P_V)f)\|_{L^{p_2}(\Sigma)}\\
	&\ls  \la^{-2+\frac n{p_1'}-\frac k{p_2}}\|V\cdot \1_\la(P_V)f\|_{L^{p_1'}}\\
	&\ls \la^{-2+\frac n{p_1'}-\frac k{p_2}}\|V\|_{L^q}\|\1_\la(P_V)f\|_{L^{q_1}}\\
	&\ls \la^{-2+\frac n{p_1'}-\frac k{p_2}+\sigma(q_1)}\eps^{1/2}\|V\|_{L^q}\|f\|_{L^2}.
\end{align*}
Here we require that $p_1'\le p_2$, $ \frac n{p_1'}-\frac k{p_2}<2$ and $\frac1{p_1'}=\frac1q+\frac1{q_1}$ with $q_1\ge \frac{2n+2}{n-1}$ and $q=n/2$. 
For $k>n-4$,  we set  $\frac1{p_1'}=\max(\frac 2n,\frac 1{p_2})$, and $\frac1{q_1}=\max(0,\frac1{p_2}-\frac2n)\le \max(0,\frac{n-4}{2n})\le\frac{n-1}{2n+2}.$ Then $\sigma(q_1)=\frac{n-1}2-\frac n{q_1}$. So we obtain the desired exponent
\[-2+\frac n{p_1'}-\frac k{p_2}+\sigma(q_1)=\frac{n-1}2-\frac k{p_2}=\delta(k,p_2).\]
This completes the proof for the case $k>n-4$. 

\subsection{Higher codimensions}\label{hcd}
In this subsection, we only handle $n\ge5$ and $k\le n-4$ with $\eps=1$. In these cases, we have $p_c=2$ and $A=\la^{\frac{n-k-1}2}$. To be specific, we only work on the endpoint $L^2(\Sigma)$ norm in the following, though the argument can still work for non-endpoint $L^p(\Sigma)$ norms. We shall use a bootstrap argument involving an induction on the dimensions of the submanifolds.

Fix $\ell \in \mathbb{Z}$ with $1\le 2^\ell\ll\la$. Let $\{x_j\}$ be a maximal $\la^{-1}$-separated set on $\Sigma$,  and $\{y_i\}$ be a maximal $\la^{-1}2^\ell$-separated set on $M$. In local coordinate, let $\alpha\in C_0^\infty$ with $\alpha_j=\alpha(\la(x-x_j))$ being a partition of unity on $\Sigma$, and similarly let $ \beta\in C_0^\infty$ with $\beta_i^\ell= \beta(\la2^{-\ell}(y-y_i))$ being a partition of unity on $M$.  For each fixed $j$, there are only finitely many $i$ such that the support of $\alpha_j$ and $\beta_i^\ell$ 
are within distance $\la^{-1}2^\ell$. Since the number of such $i$ is bounded by some constant independent of $\ell$, for simplicity we may assume that there is just one such $i_j$ with $|y_{i_j}-x_j|\ls \la^{-1}2^\ell$.

We split the kernel $m_1(P^0,\la-1)(x,y)$ into the sum of 
\[K_0(x,y)=m_1(P^0,\la-1)(x,y)\1_{d_g(x,y)<\la^{-1}}\]
and
$$K_\ell(x,y) =m_1(P^0,\la-1)(x,y)\1_{d_g(x,y)\approx  \la^{-1}2^\ell}$$ with $1\le 2^\ell\ll \la$. The operators associated with these kernels are denoted by $K_0$ and $K_\ell$ respectively.

Since $V\in \mathcal{K}(M)$, for any $\eps_1>0$, we have for large $\la$
\[\sup_{x\in  M}\int_{d_g(y,x)<\la^{-1}} K_0(x,y)|V(y)||g(y)|dy<\eps_1\|\beta^0_{i_j} g\|_{L^\infty(M)}.\]
Since $V\in L^{n/2}(M)$, we can split $V$ into the sum of a bounded part and an unbounded part with small $L^{n/2}$-norm. So we may only consider the unbounded part and assume  $\|V\|_{L^{n/2}(M)}<\eps_1$, while the bounded part can be handled similarly with a better bound. Then
\[\sup_{x\in  M}\int_{d(y,x)\approx \la^{-1}2^\ell} K_\ell(x,y)|V(y)||g(y)|dy<\eps_1 2^{-N\ell}\|\beta_{i_j}^\ell g\|_{L^\infty(M)}.\]
Let $\rho\in \mathcal{S}(\mathbb{R})$ be nonnegative and satisfy $\rho(0)=1$ and $\hat \rho(t)=0$ if $|t|>1$. Let \[\eta_\ell( \la^{-1}2^\ell \tau)=\sum_\pm\rho(\la^{-1}2^\ell(\la\pm \tau))=\frac1\pi\int_{\mathbb{R}} \la 2^{-\ell}\hat \rho( \la 2^{-\ell} t)e^{it\la}\cos t\tau dt.\]
So 
$\eta_\ell(\la^{-1}2^\ell\tau)\approx 1$ when $|\tau-\la|\le 1$. 
By the finite propagation property \cite[Theorems 3.3 \& 3.4]{cosi}, the wave kernel $\cos tP_V(x,y)$ vanishes if $d_g(x,y)>t$. So we have $$\eta_\ell( \la^{-1}2^\ell P_V)(x,y)=0\ \ \text{when}\  \ d_g(x,y)>\la^{-1}2^{\ell}.$$  Since 
$\rho\in \mathcal{S}(\mathbb{R})$, we have for $\tau>0$
\[|\eta_\ell(\la^{-1}2^\ell\tau)|\ls (1+\la^{-1}2^\ell|\tau-\la|)^{-N},\ \forall N.\]
Then by the eigenfunction bounds \eqref{BSSLp} we get $$\sup_{x,y\in M}|\eta_\ell(\la^{-1}2^\ell P_V)(x,y)|\ls \la^{n}2^{-\ell}.$$ Thus, by Young's inequality, we have
\[\|\eta_\ell( \la^{-1}2^\ell P_V)f\|_{L^\infty(M)}\ls \la^{n/2}(2^\ell)^{-1+\frac n{2}}\|f\|_{L^{2}(M)}.\]
Then for each fixed $j$ and $\ell$, we have 
\begin{align*}
	&\|\alpha_jK_\ell(V\cdot \1_\la(P_V)f)\|^{2}_{L^{2}(\Sigma)}\\
	&\ls \la^{-k}\|\alpha_jK_\ell(V\cdot \1_\la(P_V)f)\|^{2}_{L^{\infty}(\Sigma)}\\
	&\ls  \la^{-k}\cdot \eps_1^22^{-2N\ell}\|\beta_{i_j}^\ell\cdot \1_\la(P_V)f\|^{2}_{L^{\infty}(M)}\\
	&= \la^{-k}\cdot \eps_1^22^{-2N\ell}\|\beta_{i_j}^\ell\cdot \eta_\ell(\la^{-1}2^\ell P_V)\eta_\ell(\la^{-1}2^\ell P_V)^{-1}\1_\la(P_V)f\|^2_{L^{\infty}(M)}\\
	&\ls \la^{-k}\cdot \eps_1^2 2^{-2N\ell}\cdot \la^n\|\tilde \beta_{i_j}^\ell\cdot \eta_\ell(\la^{-1}2^\ell P_V)^{-1}\1_\la(P_V)f\|^{2}_{L^{2}(M)}.
\end{align*}
Here the function $\tilde \beta_{i_j}^\ell$ is supported in  the $4\la^{-1}2^\ell$-neighborhood of $y_{i_j}$. Thus, we have 
\begin{equation}\label{eqboots}
	\begin{aligned}
		&\|K_\ell(V\cdot \1_\la(P_V)f)\|_{L^{2}(\Sigma)}\\
		&=(\sum_j\|\alpha_jK_\ell(V\cdot \1_\la(P_V)f)\|_{L^{2}(\Sigma)}^{2})^{\frac1{2}}\\
		&\ls\eps_1 \la^{\frac {n-k}{2}}2^{-N\ell }(\sum_j\|\tilde \beta_{i_j}^\ell\cdot \eta(\la^{-1}2^\ell P_V)^{-1}\1_\la(P_V)f\|^{2}_{L^{2}(M)})^\frac1{2}\\
		&\ls\eps_1 \la^{\frac {n-k}{2}}2^{-N\ell } \|\eta(\la^{-1}2^\ell P_V)^{-1}\1_\la(P_V)f\|_{L^{2}(\mathcal{T}_{\la^{-1}2^\ell}(\Sigma))}
	\end{aligned}
\end{equation}
Here $\mathcal{T}_r(\Sigma)$ is the $r$-neighborhood of $\Sigma$. 

To explain the idea to handle the  last term in \eqref{eqboots}, we first consider the special case $V \equiv 0$. We may assume in local coordinates
$$
\Sigma=\{(x, 0)\in \mathbb{R}^k \times \mathbb{R}^{n-k}: \ |x| \leq 1\} ,
$$
and then $\mathcal{T}_r(\Sigma)$ can be covered by
$$
\bigcup_{|y| \leq C_1r} \Sigma_{y}
$$
where for each $y \in \mathbb{R}^{n-k}$ we define
$$ 
\Sigma_{y}=\{(x,y)\in \mathbb{R}^k\times \mathbb{R}^{n-k}: |x| \le 2\} \text {. }
$$
By the proof of \eqref{bgt} in \cite{BGT}, the constant $C$ in \eqref{bgt} is uniform under small smooth perturbations on $\Sigma$, so  there exist constants $C_2>0$ and $\delta>0$ such that
\begin{equation}\label{pert}
	\sup_{|y|<\delta}\|\1_\la(P^0)\|_{L^2(M)\to L^{2}(\Sigma_{y})} \leq C_2 A.
\end{equation}
Thus
\begin{equation}\label{bt0}
	\begin{aligned}
		&\|\1_\la(P^0)f\|_{L^{2}(\mathcal{T}_r(\Sigma))} \\
		& \lesssim (\int_{|y| \le C_1 r}\|\1_\la(P^0)f\|^{2}_{L^{2}(\Sigma_{y})} d y)^{\frac1{2}} \\
		&\lesssim r^{\frac{n-k}{2}} A \|f\|_{L^2(M)}.
	\end{aligned}
\end{equation}
So if $V\equiv 0$ the last term in \eqref{eqboots} can bounded by a constant times $2^{-N\ell}A\|f\|_{L^2(M)}$. Summing over $\ell$ gives the desired bound $A$.

Next, we consider  general $V$.  To use the  bootstrap argument, the difficulty here is that the last term in \eqref{eqboots} is  the norm over some neighborhood of $\Sigma$ rather than the norm over $\Sigma$. To get around this, the key idea is to construct a closed foliation with leaves homeomorphic to $\Sigma$. We shall use an induction argument with respect  to $k$.

In the following, we shall work in a fixed local coordinate chart $U\subset M$ containing $\Sigma$. Let
$D^d(r) = \{y\in\mathbb{R}^d:|y|\leq r\}.$ Without loss of generality, we always assume that in this local coordinate, $\Sigma\subset D^n(2)$ and $D^n(100n)\subset U$.

\noindent \textbf{Base Step.} We start with the base case $k=1$. Let $\Sigma$ be a curve on $M$, by choosing local coordinates, we may assume it is
$$
\Sigma = \{(x_1,0)\in \mathbb{R}\times \mathbb{R}^{n-1}:\tfrac{1}{2}\leq x_1 \leq \tfrac{3}{2}\},
$$
and let 
$$
D_\Sigma = \{x\in\mathbb{R}^n:\tfrac{1}{2}\leq |x| \leq \tfrac{3}{2}\}.
$$
For any $\theta\in S^{n-1}\subset\mathbb{R}^n$, let $\Sigma_\theta$  be the intersection of $D_\Sigma$ and the ray from the origin with direction $\theta$.  Then we have
$$
D_\Sigma=\bigcup_{\theta \in S^{n-1}} \Sigma_\theta \sim \Sigma\times S^{n-1}.
$$
We will not distinguish $\Sigma_\theta$ and $D_\Sigma$ between their pullbacks, since the metrics are comparable. Let $$B=\sup _{\theta\in S^{n-1}}\|\1_\lambda(P_V)\|_{L^2(M) \to L^2(\Sigma_\theta)}.$$ By the $L^\infty$ bound in \eqref{BSSLp}, we have 
$
 B \ls\lambda^{\frac{n-1}{2}}<+\infty.
$ We shall prove that $B\ls A$.
\begin{figure}[h]
	\centering
	\includegraphics[width=0.8\textwidth]{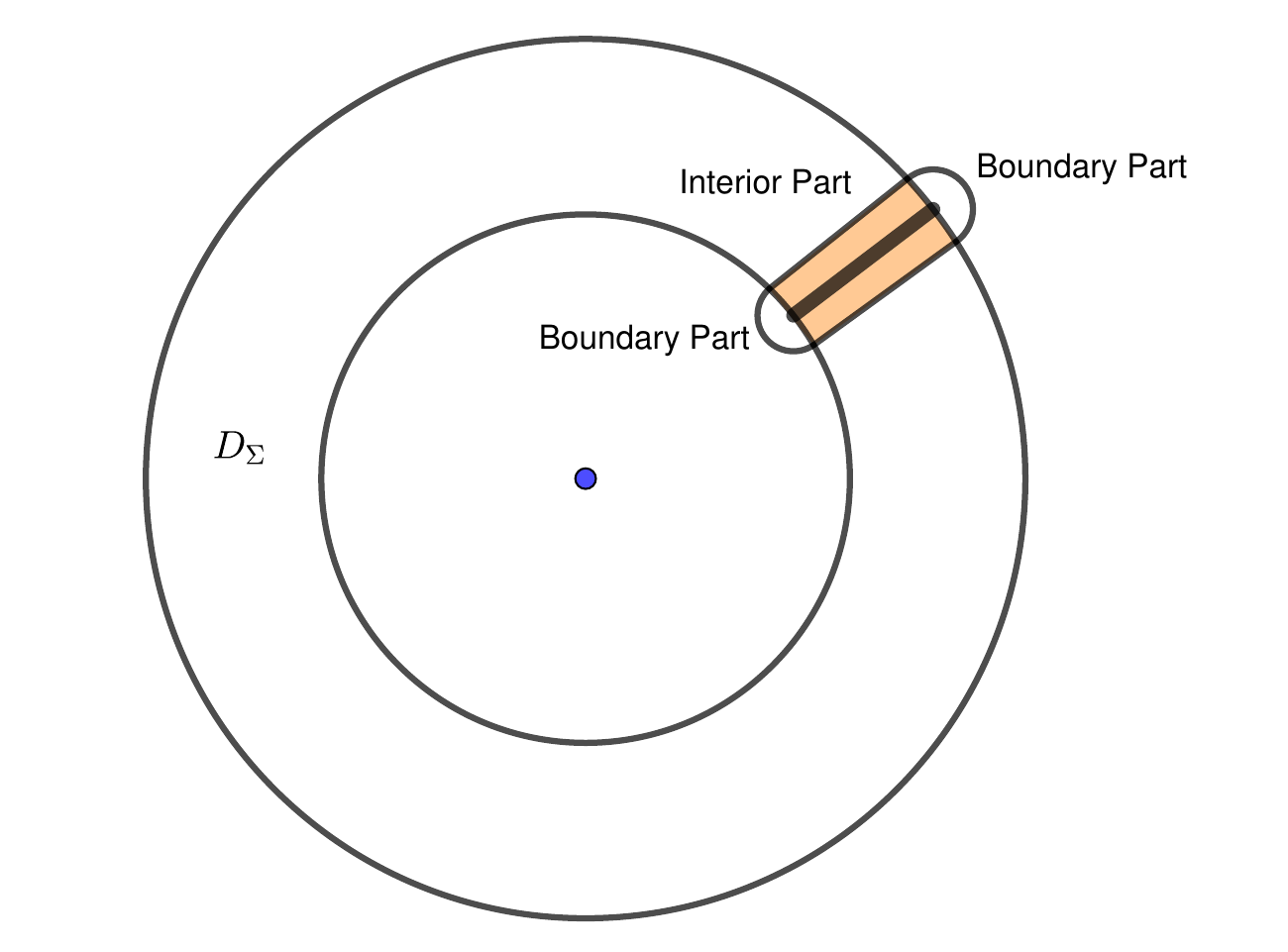}
	\caption{The neighborhood of $\Sigma_\theta$}
	\label{fig1}
\end{figure}

As in Figure \ref{fig1}, the neighborhood $\mathcal{T}_r(\Sigma_\theta)$ can be split into two parts.

\noindent \textbf{Interior Part:} $\mathcal{T}_r(\Sigma_\theta) \cap D_\Sigma$. As in \eqref{bt0}, using the above foliation, we have
\begin{equation}\label{part1}
	\begin{aligned}
		&\|\1_\la(P_V)f\|_{L^{2}(\mathcal{T}_r(\Sigma_\theta) \cap D_\Sigma)} \\
		& \lesssim (\int_{|\phi-\theta|\le r}\|\1_\la(P_V)f\|^{2}_{L^{2}(\Sigma_{\phi})} d \phi)^{\frac1{2}} \\
		&\lesssim r^{\frac{n-1}{2}} B \|f\|_{L^2(M)}.
	\end{aligned}
\end{equation}

\noindent \textbf{Boundary Part:} $\mathcal{T}_r(\Sigma_\theta) \setminus D_\Sigma$.
This part is essentially the $r$-neighborhood of $\partial \Sigma_\theta$, namely the endpoints of the curve $\Sigma_\theta$. Then by the $L^\infty$ bound in \eqref{BSSLp} we have 
\begin{equation}\label{part2}
	\begin{aligned}
		&\|\1_\la(P_V)f\|_{L^{2}(\mathcal{T}_r(\Sigma_\theta) \setminus D_\Sigma)} \\
		& \ls r^{\frac{n}2}\|\1_\la(P_V)f\|_{L^\infty(M)}\\
		&\ls r^{\frac{n}2}\la^{\frac{n-1}2}\|f\|_{L^2(M)}.
	\end{aligned}
\end{equation}
When $k=1$, $A=\la^{\frac{n-2}2}$ and $r=2^\ell \la^{-1}$, combining \eqref{part1}  with \eqref{part2} we have for any $\theta\in S^{n-1}$
$$
\begin{aligned}
	\lambda^{\frac{n-1}{2}}\|\1_\la(P_V) f\|_{L^2(\mathcal{T}_r(\Sigma_\theta))} &\lesssim  \lambda^{\frac{n-1}{2}}\cdot r^{\frac{n-1}{2}} B\|f\|_{L^2(M)}+\lambda^{\frac{n-1}{2}} \cdot \lambda^{\frac{n-1}{2}} r^{\frac{n}{2}}\|f\|_{L^2(M)} \\
	& = 2^{\frac{n-1}{2}\ell}B\|f\|_{L^2(M)}+2^{\frac{n}{2}\ell}A\|f\|_{L^2(M)}.
\end{aligned}
$$
Then we can estimate \eqref{eqboots} for $\Sigma_\theta$ by 
\begin{equation}\label{basecase}
	\|K_\ell(V\cdot \1_\la(P_V)f)\|_{L^2(\Sigma_\theta)}\ls (\eps_1 B+C_{\eps_1} A) 2^{-N\ell}\|f\|_{L^2(M)}.
\end{equation}
Summing over $\ell$, we  get the contribution $\eps_1 B +C_{\eps_1} A$ in this case. Combing this with the contributions $O(A)$ from Cases 1-4, we get $B\le \eps_1 B+C_{\eps_1} A$, which implies $B\ls A$ as desired.

\begin{remark}
	Note that in the argument for $k=1$ we only use the fact that $S^{n-1}$ is a closed manifold. So the same argument works for any smooth embedding $\Phi:\Sigma \times Q \rightarrow D^n(10)$, where $Q$ is smooth closed manifold of dimension $n-1$ and $\Phi(\Sigma, \theta_0)=\Sigma$ for some $\theta_0\in Q$.  This fact will be used to deal with $k\ge2$.
\end{remark}

 For $k\ge 2$, by rotation in local coordinates, one can similarly construct an embedding $\Phi:\Sigma\times S^{n-k} \rightarrow D^n(10)$. The Interior Part is similar, but the Boundary Part is more difficult to handle directly. To get around this difficulty, we generalize the family of embedded submanifolds and use an induction argument. Without loss of generality, we always assume $\Sigma$ is sufficiently small and homeomorhpic to a disk in $\mathbb{R}^k$. 

Let 
$$
\begin{aligned}
	\mathcal{F}(\Sigma) = \{&(Q,\Phi): Q \text{ is a smooth closed  manifold and }\dim Q =\dim M-\dim\Sigma, \\
	&\Phi:\Sigma \times Q \rightarrow D^n(10(n-k))\text{ is a smooth embedding}, \\
	&\Phi(\Sigma, \theta_0)=\Sigma \text{ for some } \theta_0 \in Q\}. \\
\end{aligned}
$$
From the above discussion, we see that $\mathcal{F}(\Sigma)$ is not empty. 

Let $\Sigma_\theta=\Phi(\Sigma,\theta)$.  We claim that for $n\geq 5$, $k\leq n-4$, $\dim\Sigma=k$, $(Q,\Phi)\in \mathcal{F}(\Sigma)$, there exists a constant $C = C(Q,\Phi,V,\Sigma,M)$ such that
\begin{equation}\label{induction}
	\sup _{\theta\in Q}\|\1_\lambda(P_V)\|_{L^2(M) \to L^2(\Sigma_\theta)} \leq CA(k).
\end{equation}
 In the following, we establish \eqref{induction} by an induction argument on $k$. The base case $k=1$ has been done as above. Suppose that \eqref{induction} holds for submanifolds of dimension $k-1$. Now we prove it for submanifolds of dimension $k$.

\noindent \textbf{Induction Step.} We fix any $(Q,\Phi)\in \mathcal{F}(\Sigma)$, and let $D_\Sigma = \Phi(\Sigma,Q)$. As before, we will not distinguish $\Sigma_x$ and $D_\Sigma$ between their pullbacks, since the metrics are comparable the ones on $Q\times\Sigma$. Let $$B = \sup _{\theta\in Q}\|\1_\lambda(P_V)\|_{L^2(M) \to L^2(\Sigma_\theta)}.$$ By the $L^\infty$ bound in \eqref{BSSLp}, we have 
$
B \ls\lambda^{\frac{n-1}{2}}<+\infty.
$ As before, the neighborhood $\mathcal{T}_r(\Sigma_\theta)$ can be split into two parts.

\noindent \textbf{Interior Part:} $\mathcal{T}_r(\Sigma_\theta) \cap D_\Sigma$, this contribution of this part can be handled as in \eqref{part1} by $r^{\frac{n-k}2}B\|f\|_{L^2(M)}$.

\noindent \textbf{Boundary Part:} $\mathcal{T}_r(\Sigma_\theta) \setminus D_\Sigma$ is essentially contained in $\mathcal{T}_{r}(\partial\Sigma_\theta)$. We need to do some extensions in order to apply the induction hypothesis to $\partial \Sigma_\theta$, which is a smooth submanifold of dimension $k-1$.
\begin{lemma}[Local extension lemma]\label{extlemma}
	Let $Z \subset \mathbb{R}^n$ be a submanifold of dimension $k-1$.
	Suppose that we have an smooth embedding $\Phi:Z\times D^{n-k}(1) \rightarrow D^n(10(n-k))$ and $\Phi(Z,0) = Z$. Then for any $x_0 \in Z$, there exist a neighborhood $U_0$ of $x_0$ in $Z$, and $\delta > 0$, a smooth closed manifold $\tilde Q$ of dimension $n-k+1$, and an embedding $\phi:D^{n-k}(\delta)\to \tilde Q$, an embedding $\tilde\Phi:U_0\times \tilde Q\to D^n(10(n-k+1))$ that extends the region $\Phi(U_0,D^{n-k}(\delta))$ in the following sense
	$$\tilde\Phi(x,\phi(y)) = \Phi(x,y)\ \ \text{ for }\ x\in U_0, \ y\in D^{n-k}(\delta).$$
\end{lemma}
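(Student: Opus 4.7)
The plan is to construct $\tilde Q$, $\phi$, and $\tilde\Phi$ in a neighborhood of $x_0$ by first flattening the embedding $\Phi$ to a standard form via the rank theorem, then thickening the resulting flat hypersurface in the transverse direction, and finally compactifying the added fiber into a closed $(n-k+1)$-manifold via a tubular neighborhood construction.

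First, I apply the rank theorem to the smooth embedding $\Phi$, combined with a choice of local chart on $Z$ near $x_0$, to pick local coordinates on $\mathbb{R}^n$ in a neighborhood of $\Phi(x_0,0)$ under which
\[
\Phi(x,y)=(x,y,0)\in\mathbb{R}^{k-1}\times\mathbb{R}^{n-k}\times\mathbb{R},
\qquad (x,y)\in U_0\times D^{n-k}(\delta),
\]
for $U_0$ a small neighborhood of $x_0$ in $Z$ and some $\delta>0$. In these coordinates, the image of $\Phi$ is a flat codimension-one slab with a clean transverse direction, the last coordinate $s$. On $U_0\times D^{n-k}(\delta)\times(-\epsilon,\epsilon)$ for small $\epsilon>0$, I then define the local thickening $\tilde\Phi_{\mathrm{loc}}(x,y,s)=(x,y,s)$, which is an embedding onto an open neighborhood of the slab in $\mathbb{R}^n$.

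Second, I compactify the open transverse disk $D^{n-k}(\delta)\times(-\epsilon,\epsilon)$ into a closed manifold. Take $\tilde Q=S^{n-k+1}$, and pick $\phi:D^{n-k}(\delta)\to\tilde Q$ as the composition of the inclusion $y\mapsto(y,0)$ into $\mathbb{R}^{n-k+1}$ with a smooth embedding of a small disk $D^{n-k+1}(\min(\delta,\epsilon))$ into $\tilde Q$, via stereographic coordinates around a chosen basepoint $q_0\in\tilde Q$.

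Third, I extend $\tilde\Phi_{\mathrm{loc}}$ to a global embedding $\tilde\Phi:U_0\times\tilde Q\to D^n(10(n-k+1))$ by realizing the image as an open tubular neighborhood of an embedded copy $Q^*\cong\tilde Q$ of the sphere in $\mathbb{R}^n$. Since $k\ge 2$, we can embed $S^{n-k+1}$ as a round sphere of a chosen large radius $R$ in an $(n-k+2)$-dim affine subspace of $\mathbb{R}^n$, arranged to pass through $x_0$ with tangent plane $\{0\}^{k-1}\times\mathbb{R}^{n-k+1}$. Its normal bundle in $\mathbb{R}^n$ is trivial of rank $k-1$, so the tubular neighborhood theorem provides a diffeomorphism between a tubular neighborhood of $Q^*$ and $Q^*\times\mathbb{R}^{k-1}$; composing with a suitable identification of $U_0$ with an open subset of $\mathbb{R}^{k-1}$ then defines $\tilde\Phi$.

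The main technical obstacle is arranging the embedding of $Q^*$ and the tubular trivialization so that the strict compatibility $\tilde\Phi(x,\phi(y))=\Phi(x,y)$ holds. The difficulty is that $Q^*$ curves while the slab $\Phi(U_0\times D^{n-k}(\delta))$ is flat in local coordinates, so the naive tubular trivialization yields agreement only up to an error of order $O(|y|^2/R)$. To fix this, I take $R$ large and apply a compactly supported smooth correction to the tubular identification in a neighborhood of $\phi(D^{n-k}(\delta))$, aligning the tubular product structure with the flat slab exactly; shrinking $U_0$, $\delta$, and $\epsilon$ and applying the implicit function theorem shows that the resulting $\tilde\Phi$ is a smooth embedding whose restriction to $U_0\times\phi(D^{n-k}(\delta))$ coincides with $\Phi$, completing the construction.
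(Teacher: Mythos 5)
Your construction follows essentially the same route as the paper's: extend the fiber by one normal direction, compactify the resulting $(n-k+1)$-disk into a closed sphere embedded in $\mathbb{R}^n$, use the normal bundle of that sphere to build a global product structure, and then patch the global structure with the local one by a compactly supported correction. The paper does this without passing to rank-theorem coordinates — it directly sets $\Phi_1(x,w)=s\nu+\Phi(x,y)$ with $\nu$ a fixed unit normal, extends the central slice $\Phi_1(0,D^{n-k+1}(\delta_1))$ to a small closed manifold $\tilde Q$, defines $\Phi_2$ via the normal exponential of $\tilde Q$, and glues by $\tilde\Phi=\eta\,\Phi_1+(1-\eta)\,\Phi_2$ — but these differences are cosmetic.

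Two points in your write-up do need repair. First, you propose to embed $Q^*$ as a round sphere of a \emph{large} radius $R$ so that the $O(|y|^2/R)$ misalignment becomes negligible. But $\tilde\Phi$ is required to land in $D^n(10(n-k+1))$, a ball of fixed size, while the image of your tubular neighborhood has diameter comparable to $2R$; taking $R$ large therefore violates the conclusion of the lemma and, more importantly, would push the construction outside the fixed coordinate chart $U\supset D^n(100n)$ used in the induction. The paper avoids this by keeping $\tilde Q$ of size $O(\delta_1)$ (a small perturbation of the central slice) and shrinking $\delta$ instead; shrinking the patch and flattening the sphere have the same local effect without blowing up the image. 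Second, the final assertion that ``applying the implicit function theorem shows that the resulting $\tilde\Phi$ is a smooth embedding'' does not address the actual issue, which is that a convex combination of two embeddings need not be an immersion. The paper's verification is a Jacobian estimate: after normalizing so that $\partial_{x_i}\Phi_1(0,0)=\partial_{x_i}\Phi_2(0,0)=e_i$ and $\partial_{w_j}\Phi_1(0,0)=\partial_{w_j}\Phi_2(0,0)=f_j$ form an orthonormal frame, one shows both differentials stay within $\eps$ of this frame on $|x|,|w|<\delta$, so the cutoff combination $\eta\Phi_1+(1-\eta)\Phi_2$ has differential within $O(\eps)$ of the same frame and hence full rank. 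You should replace the invocation of the implicit function theorem by this kind of first-order closeness argument.
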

We postpone the proof of this lemma and use it to finish the induction argument. We first split $\partial\Sigma_\theta$ into finitely many small enough pieces. For each piece $Z$, by Lemma \ref{extlemma} we can find a neighborhood $U_1$ of $\theta$, such that we can extend $\Phi(Z,U_1)$ to $\tilde\Phi(Z,\tilde Q)$ for some $(\tilde Q,\tilde\Phi)\in \mathcal{F}(Z)$. Then by induction hypothesis, we have  
\begin{align*}
	\|\1_\lambda(P_V)f\|_{L^2(\mathcal{T}_r(\partial\Sigma_\theta))} &\ls A(k-1)\cdot r^{\frac{n-(k-1)}{2}}  \|f\|_{L^2(M)}\\
	& =A(k)\cdot \lambda^{-\frac{n-k}{2}}\cdot 2^{\frac{n-k+1}{2}\ell}\|f\|_{L^2(M)}.
\end{align*}
By the compactness of $Q$, we can choose a constant uniform in $\theta\in Q$. Thus for $r=2^\ell\la^{-1}$, we have
$$
	\lambda^{\frac{n-k}{2}}\|\1_\la(P_V)f\|_{L^{2}(\mathcal{T}_r(\Sigma_\theta))}\ls 2^{\frac{n-k}{2}\ell}B\|f\|_{L^2(M)}+ A(k)\cdot 2^{\frac{n-k+1}{2}\ell}\|f\|_{L^2(M)}.
$$
Note that $N$ can be arbitrarily large in \eqref{eqboots}. Summing over $\ell$, we get the contribution  $\eps B+C_\eps A$ in this case. Finally, combing this with the contributions $O(A)$ from  Cases 1-4, we have $B\le \eps_1 B+C_{\eps_1} A$, which implies $B\ls A$ as desired.

\begin{proof}[Proof of Lemma \ref{extlemma}]
	
	Let $x=(x_1,\cdots,x_{k-1})$ be a local coordinate near $x_0$ on $Z$ and assume $x_0=0$. Let
	$\Phi_1(x,w)=s\nu+\Phi(x,y)$, where $w=(y,s)$ and $\nu\in \R^n$ is a unit normal vector of $\Phi(Z,D^{n-k}(1))$ at $\Phi(x_0,0)$.  Then we can find $\delta_1>0$ and a neighborhood $U_0$ of $x_0$ such that 
	$$\Phi_1:U_0\times D^{n-k+1}(\delta_1)\to D^n(10(n-k))$$ is a smooth embedding. 
	
	Let $e_i=\partial_{x_i}\Phi_1(0,0)$ for $1\leq i\leq k-1$, and $f_j=\partial_{w_j}\Phi_1(0,0)$ for $ 1\leq j\leq n-k+1$. By a linear transformation, we may assume $\{e_i\}\cup \{f_j\}$ is an orthonormal basis of $\R^n$. We extend $\Phi_1(0,D^{n-k+1}(\delta_1))$ to a smooth closed  manifold $\tilde Q\subset D^{n-k+1}(10\delta_1)$ that is homeomorhpic to the sphere $S^{n-k+1}$. Fix a sufficiently small $\delta_2>0$ and let $\mathcal{T}_{\delta_2}(\tilde Q)\subset \R^n$ be the $\delta_2$-neighborhood of $\tilde Q$. Then using the normal vector fields of $\tilde Q$, one can find a smooth bijection
	$$\Phi_2:D^{k-1}(\delta_2)\times \tilde Q \to \mathcal{T}_{\delta_2}(\tilde Q)$$ such that for each $q\in \tilde Q$, $\Phi_2(\cdot,q)$ is an isometry, and $\Phi_2(D^{k-1}(\delta_2),q)$ is a disk centered at $q$ and lies in the normal plane of $\tilde{Q}$ at $q$. We can choose the above $w\in D^{n-k+1}(\delta_1)$ as a local coordinate over $\Phi_1(0,D^{n-k+1}(\delta_1))\subset \tilde{Q}$. We will not distinguish the local coordinate and the corresponding point in the later calculations.
	
	Let $\bar{e}_i=\partial_{x_i}\Phi_2(0,0)$ for $1\leq i\leq k-1$, and $\bar{f}_j=\partial_{w_j}\Phi_2(0,0)$ for $ 1\leq j\leq n-k+1$. We have $\bar{f}_j=f_j$ since $\Phi_1(0,w)=\Phi_2(0,w)$ for $w$ near 0. Since both $\{e_i\}$ and $\{\bar{e}_i\}$ form an orthonormal basis of the normal plane of $\tilde{Q}$ at $w=0$, by applying a suitable orthogonal transformation, we may assume $\bar{e}_i=e_i$. By continuity, for any $\epsilon>0$, we can choose $\delta$ small enough such that when
	$|x|< \delta,\ |w|< \delta$, we have 
	\begin{equation}\label{gradest}
		|\partial_{x_i}\Phi_\alpha(x,w)-e_i| < \eps\ \ \text{and}\ \  |\partial_{w_j}\Phi_\alpha(x,w)-f_j| <\eps
	\end{equation}
	for $\alpha=1,2$ and $1\leq i\leq k-1$ and $1\leq j\leq n-k+1.$
	Choose a cutoff function $\eta\in C_0^{\infty} (D^{n-k+1}(\delta))$ with $\eta(w) = 1$ in $D^{n-k+1}(\delta/2)$, and let
	$$\tilde\Phi(x,w)  = \eta(w)\Phi_1(x,w) + (1-\eta(w))\Phi_2(x,w).$$
	Then for $|x|< \delta,\ |w|< \delta$, $\tilde{\Phi}(x,w)$ satisfies the same derivative estimates as in \eqref{gradest}, and so its Jacobian has full rank when $\eps$ is sufficiently small. So $$\tilde\Phi:D^{k-1}(\delta)\times \tilde Q \to D^n(10(n-k+1))$$ is the desired embedding.
\end{proof}

\bigskip

\section{Refinement on Case 2}\label{refine}
  In this section, we aim to refine the argument in Case 2 and remove the log loss there. Note that the resolvent-like symbol  $(\la_j^2-\tau_k^2)^{-1}$ naturally  appears in the previous perturbative argument for the wave kernels. To remove the log loss, we shall use the kernel decomposition of the resolvent operator $(-\Delta_g-(\la+i\eps )^2)^{-1}$ as in \cite{BSSY} and \cite{shaoyao}. As before, we assume $\eps=1$ or $(\log\la)^{-1}$.

\noindent \textbf{Case 2.}  $|\tau_k-\la|\le\eps$, $|\la_j-\la|\in(2^\ell,2^{\ell+1}]$, $\eps\le 2^\ell\le \la$.

Recall that we split the frequencies by the cutoff function $\psi\in C_0^\infty(\mathbb{R})$ satisfying $\psi(t)=1$ if $|t|\le  2$ and $\psi(t)=0$ if  $|t|>3$.
In this case,  $m(\la_j,\tau_k)=\frac{-\chi_\la(\tau_k)}{\la_j^2-\tau_k^2}\psi(\la_j/\la)$. Let $$m_1(\la_j,\tau_k)=\frac{-\chi_\la(\tau_k)}{\la_j^2-\tau_k^2+i\la\eps}\psi(\la_j/\la)$$ and $$m_2(\la_j,\tau_k)=m(\la_j,\tau_k)-m_1(\la_j,\tau_k).$$
We first handle $m_2$. It is clear that 
\begin{equation}
m_2(\la_j,\tau_k)=\frac{-i\la\eps \chi_\la(\tau_k)}{(\la_j^2-\tau_k^2)(\la_j^2-\tau_k^2+i\la\eps)}\psi(\la_j/\la).
\end{equation}
 For $|s-\la|\le \eps$ and $|\la_j-\la|\in(2^\ell,2^{\ell+1}]$,
we have
\[|m_2(\la_j,s)|+\eps|\partial_s m_2(\la_j,s)|\ls \eps\la^{-1}2^{-2\ell}.\]
We can use the same argument as Case 1 to handle 
\begin{align*}&\sum_{|\la_j-\la|\in(2^\ell,2^{\ell+1}]}\sum_{|\tau_k-\la|\le\eps}\int_Mm_2(\la_j,\tau_k)e_j^0(x)e_j^0(z)e_{\tau_k}(z)e_{\tau_k}(y)V(z)dz\\
	&= \sum_{|\la_j-\la|\in(2^\ell,2^{\ell+1}]}\sum_{|\tau_k-\la|\le\eps}\int_M\int_{\la-\eps}^{\la+\eps}\partial_sm_2(\la_j,s)\1_{[\la-\eps,\tau_k]}(s)e_j^0(x)e_j^0(z)e_{\tau_k}(z)e_{\tau_k}(y)V(z)dzds\\
&\ \ \ +\sum_{|\la_j-\la|\in(2^\ell,2^{\ell+1}]}\sum_{|\tau_k-\la|\le\eps}\int_Mm_2(\la_j,\la-\eps)e_j^0(x)e_j^0(z)e_{\tau_k}(z)e_{\tau_k}(y)V(z)dz\\
&=K_{1,\ell}(x,y)+K_{2,\ell}(x,y).\end{align*}

We handle $K_{2,\ell}$ first. For any $f\in L^{2}(M)$, we have
\begin{align*}
	\|K_{2,\ell}f\|_{X}&=\|\1_{\la,\ell}(P^0)m(P^0,\la-\eps)(V\cdot \1_\la(P_V)f)\|_{X}\\
	&\ls A(2^\ell/\eps)^{1/2}\|\1_{\la,\ell}(P^0)m(P^0,\la-\eps)(V\cdot \1_\la(P_V)f)\|_{L^2}\\
	&\ls A(2^\ell/\eps)^{1/2}\eps\la^{-1}2^{-2\ell}\|\1_{\la,\ell}(P^0)(V\cdot \1_\la(P_V)f)\|_{L^2}\\
	&\ls A(2^\ell/\eps)^{1/2}\eps\la^{-1}2^{-2\ell}\la^{\sigma(p_0)}2^{\ell/2}\|V\cdot\1_\la(P_V)f\|_{L^{p_0'}}\\
	&\ls A(2^\ell/\eps)^{1/2}\eps\la^{-1}2^{-2\ell}\la^{\sigma(p_0)}2^{\ell/2}\|V\|_{L^{q}}\|\1_\la(P_V)f\|_{L^{q_0}}\\
	&\ls A(2^\ell/\eps)^{1/2}\eps\la^{-1}2^{-2\ell}\la^{\sigma(p_0)}2^{\ell/2}\la^{\sigma(q_0)}\eps^{1/2}\|V\|_{L^{q}}\|f\|_{L^{2}}\\
	&=A2^{-\ell}\eps\|V\|_{L^{q}}\|f\|_{L^{2}}
\end{align*}
   
The method to handle $K_{1,\ell}$ is similar. Note that there is no log loss if we sum over $\ell$.

Next, we handle $m_1$.   It suffices to deal with $m_1(\la_j,\tau_k)$ for all $\la_j\le 3\la$, as the easy case $|\la_j-\la|\le\eps$ can be handled similarly as in Case 1. 

We write
\begin{align*}&\sum_{\la_j}\sum_{|\tau_k-\la|\le\eps}\int_Mm_1(\la_j,\tau_k)e_j^0(x)e_j^0(z)e_{\tau_k}(z)e_{\tau_k}(y)V(z)dz\\
	&= \sum_{\la_j}\sum_{|\tau_k-\la|\le\eps}\int_M\int_{\la-\eps}^{\la+\eps}\partial_sm_1(\la_j,s)\1_{[\la-\eps,\tau_k]}(s)e_j^0(x)e_j^0(z)e_{\tau_k}(z)e_{\tau_k}(y)V(z)dzds\\
	&\ \ \ +\sum_{\la_j}\sum_{|\tau_k-\la|\le\eps}\int_Mm_1(\la_j,\la-\eps)e_j^0(x)e_j^0(z)e_{\tau_k}(z)e_{\tau_k}(y)V(z)dz\\
	&=K_{1}(x,y)+K_{2}(x,y).\end{align*}
For $|s-\la|\le \eps$,
we have
\[|m_1(\la_j,s)|+\eps|\partial_s m_1(\la_j,s)|\ls \la^{-1}.\]
As before, we just handle $K_2$. It suffices to prove 
\begin{equation}\label{lowres}
		\|(-\Delta_g-(\la+i\eps)^2)^{-1}\psi(P^0/\la)\|_{L^{p_0'}(M)\to X}\ls A_1\la^{\sigma(p_0)-1},
\end{equation}
where $A_1$ is defined to be the value of $A$ at   $\eps=1$. It satisfies $A_1\eps^{1/2}\ls A\ls A_1$.
Indeed, by \eqref{lowres} we have
\begin{align*}
	\|m_1(P^0,\la-\eps)(V\cdot \1_\la(P_V)f)\|_{X}&\ls A_1\la^{\sigma(p_0)-1}\|V\cdot \1_\la(P_V)f\|_{L^{p_0'}}\\
	& \ls A_1\la^{\sigma(p_0)-1}\|V\|_{L^q}\|\1_\la(P_V)f\|_{L^{q_0}}\\
	&\ls A_1\la^{\sigma(p_0)-1}\la^{\sigma(q_0)}\eps^{1/2}\|V\|_{L^q}\|f\|_{L^2}\\
	&=A_1\eps^{1/2}\|V\|_{L^q}\|f\|_{L^2}.
\end{align*}
This gives us the desired bound.

In the following, we aim to prove \eqref{lowres}, albeit with a potential loss in certain cases. As in \cite[Section 3]{BHSS}, we shall use the formula
\[(-\Delta_g-(\la+i\eps)^2)^{-1}=\frac i{\la+i\eps}\int_0^\infty e^{i\la t}e^{-\eps t}\cos tP^0dt.\]
Let $\rho\in C_0^\infty(\mathbb{R})$ satisfy \[\1_{[-\eps_0/2,\eps_0/2]}\le \rho\le \1_{[-\eps_0,\eps_0]},\]
where $\eps_0=\min\{1,\text{Inj}(M)/2\}$ with Inj($M$) denoting the injectivity radius of $(M, g)$. Let $\beta\in C_0^\infty(\mathbb{R})$ satisfy
\[|\beta(t)|\le1,\ \supp\beta\subset [1/2,2],\ \ \sum_{j\in\mathbb{Z}}\beta(2^{-j}t)=1,\ t>0.\]Denote
\[\tilde\rho(t)=1-\sum_{j\ge0}\beta(2^{-j}t),\ t>0.\]Then $\supp\tilde\rho\subset [-4,4]$.
Let 
\[R_\la(\tau)=\frac i{\la+i\eps}\int_0^\infty (1-\rho(\eps t))e^{i\la t}e^{-\eps t}\cos t\tau dt\]
\[S_0(\tau)=\frac i{\la+i\eps}\int_0^\infty \tilde\rho(\la t)\rho(\eps t)e^{i\la t}e^{-\eps t}\cos t\tau dt\]
and for $j=1,2,...,[\log_2(\la/\eps)]$
\[S_j(\tau)=\frac i{\la+i\eps}\int_0^\infty \beta(\la2^{-j}t)\rho(\eps t)e^{i\la t}e^{-\eps t}\cos t\tau dt.\]
Then we write
\begin{equation}\label{resker}
	(-\Delta_g-(\la+i\eps)^2)^{-1}\psi(P^0/\la)=\Big(S_0(P^0)+\sum_{1< 2^j\le \la}S_j(P^0)+\sum_{\la< 2^j\le \la/\eps}S_j(P^0)+R_\la(P^0)\Big)\psi(P^0/\la).
\end{equation}
Here for $\tau\ge0$ we have
\begin{equation}\label{Rla}|R_\la(\tau)\psi(\tau/\la)|\ls \la^{-1}\eps^{-1}(1+\eps^{-1}|\la-\tau|)^{-N},\ \ \forall N,\end{equation}
and for  $j=0,1,2,...,[\log_2(\la/\eps)]$  
\begin{equation}\label{Sj}|S_j(\tau)\psi(\tau/\la)|\ls \la^{-2}2^j(1+\la^{-1}2^j|\la-\tau|)^{-N},\ \ \forall N.\end{equation}

Then by the spectral theorem with a simple duality argument (see e.g. \cite[Proof of Lemma 2.3]{BSSY}) we obtain
\[\|R_\la(P^0)\psi(P^0/\la)\|_{L^{p_0'}(M)\to X}\ls A_1\la^{\sigma(p_0)-1},\]
and for $j=0,1,2,...,$
\[\|S_j(P^0)\psi(P^0/\la)\|_{L^{p_0'}(M)\to X}\ls A_1\la^{\sigma(p_0)-1}.\] 
So we get
\begin{equation}\label{nonlocal1}
	\|\sum_{\la< 2^j\le \la/\eps}S_j(P^0)\psi(P^0/\la)\|_{L^{p_0'}(M)\to X}\ls A_1\la^{\sigma(p_0)-1}\log(2+\eps^{-1})
\end{equation}
If $M$ is negatively curved and $\eps=(\log\la)^{-1}$, we obtain the kernel bound by \cite[(3.22)]{BHSS}
\begin{equation}\label{nonlocal2}
	\Big|\sum_{\la< 2^j\le \la/\eps}S_j(P^0)\psi(P^0/\la)(x,y)\Big|\ls_{\delta_0} \la^{\frac{n-3}2+\delta_0},\ \forall \delta_0>0.
\end{equation}
This trivially implies an $L^{p_0'}(M)\to X$ operator bound by Young's inequality. In some cases, it can be used to improve \eqref{nonlocal1}.

Let $\theta=\la^{-1}2^j$. For $1< 2^j\le \la$, by the proof of the kernel estimate of $S_j(P^0)$ in   \cite[(2.23)]{shaoyao}, we can obtain the kernel of its smooth cutoff in essentially  the same form
\begin{equation}\label{Sjker}S_j(P^0)\psi(P^0/\la)(x,y)=\la^{\frac{n-3}2}\theta^{-\frac{n-1}2}e^{i\la d_g(x,y)}a_\theta(x,y)+O(\la^{-1}\theta^{1-n}\1_{d_g(x,y)<4\theta})\end{equation}
where  the smooth function $a_\theta(x,y)$ is supported in $\{d_g(x,y)/\theta\in (1/4,4)\}$ and 
\[|\partial_{x,y}^\gamma a_\theta(x,y)|\le C_\gamma \theta^{-|\gamma|},\ \forall \gamma.\]
We denote the first term in \eqref{Sjker} by $T_j(x,y)$ and the remainder term by $r_j(x,y)$.

\subsection{Period integrals}
In this subsection, we prove \eqref{lowres} for $\|f\|_{X}=|\int_\Sigma fd\sigma|$ and $A_1=\la^{\frac{n-k-1}2}$, albeit with a potential $\log\log\la$ loss in the case $\eps=(\log\la)^{-1}$. We first handle the remainder term in \eqref{Sjker}. Indeed,
\begin{align*}
	\|r_jf\|_{X}&\ls \la^{-1}\theta^{1-n}\cdot \theta^k\|f\|_{L^1(\mathcal{T}_\theta(\Sigma))}\\
	&\ls \la^{-1}\theta^{1-n}\cdot \theta^{k}\theta^{\frac{n-k}{p_0}}\|f\|_{L^{p_0'}(M)}\\
	&\ls \la^{-1}\theta^{1-\frac{n-k}{p_0'}}\|f\|_{L^{p_0'}(M)}.
\end{align*}
Then 
\begin{align*}\sum_j\|r_jf\|_{X}&\ls( \la^{-1}+\la^{-2+\frac{n-k}{p_0'}})\|f\|_{L^{p_0'}(M)}.
\end{align*}
It is better than the desired bound, since 
\[-2+\frac{n-k}{p_0'}<-2+\frac n{p_0'}-\frac k2=\frac{n-k-1}2+\sigma(p_0)-1.\]
For the following type oscillatory integrals
$$
\int_\Sigma e^{i\la d_g(x,y)}a(x,y)dx,
$$
where $a\in C_0^\infty$ is supported in $\{(x,y): d_g(x,y)\approx 1\}$ and $|\partial_{x,y}^\gamma a(x,y)|\le C_\gamma.$ If the support of $a(x,y)$ is sufficiently small, then $x$ is a critical point for the phase function
if and only if the geodesic connecting $x,y$ is perpendicular to $\Sigma$, and in this case the hessian is nondegenerate. Thus, by stationary phase we know it is of size $O(\lambda^{-k/2})$.

Let $\{x_\ell\}$ be a maximal $\theta$-separated set in $\Sigma$. Suppose that $\rho_\theta$ is a smooth cutoff function on $\Sigma$ with support of diameter $\sim \theta$, and $\eta_\theta$ is a smooth cutoff function on $M$ with support of diameter $\sim \theta$, and $\{\rho_\theta(x-x_\ell)\}$ is a partition of unity on $\Sigma$. Then by rescaling and stationary phase we have
\begin{align*}
	\|T_jf\|_{X}&\ls \la^{\frac{n-3}2}\theta^{-\frac{n-1}2}\sum_\ell \Big|\iint e^{i\la d_g(x,y)}a_\theta(x,y)\rho_\theta(x-x_\ell)\eta_\theta(y-x_\ell)f(y)dydx\Big|\\
	&= \la^{\frac{n-3}2}\theta^{-\frac{n-1}2}\theta^{n+k}\sum_\ell \Big|\iint e^{i2^j d_g(\theta x+x_\ell,\theta y+x_\ell)/\theta}a_\theta(\theta x+x_\ell,\theta y+x_\ell)\rho_\theta(\theta x)\eta_\theta(\theta y)f(\theta y+x_\ell)dydx\Big|\\
	&\ls \la^{\frac{n-3}2}\theta^{-\frac{n-1}2}\theta^{n+k}(2^j)^{-k/2}\sum_\ell \int|\eta_\theta(\theta y)f(\theta y+x_\ell)|dy\\
	&\ls \la^{\frac{n-3}2}\theta^{-\frac{n-1}2}\cdot  \theta^{n+k}\cdot (2^j)^{-k/2}\cdot \theta^{-n}\|f\|_{L^1(\mathcal{T}_\theta(\Sigma))}\\
	&\ls \la^{\frac{n-3}2}\theta^{-\frac{n-1}2}\cdot \theta^{k}\cdot (2^j)^{-k/2}\cdot \theta^{\frac{n-k}{p_0}}\|f\|_{L^{p_0'}(M)}\\
	&=\la^{\frac{n-k-3}2}\theta^{-\frac{n-k-1}2+\frac{n-k}{p_0}}\|f\|_{L^{p_0'}(M)}.
\end{align*}
Here we apply stationary phase to the phase function $d_\theta(x,y)=d_g(\theta x+x_\ell,\theta y+x_\ell)/\theta$ and use the fact that $d_\theta(x,y)$ becomes arbitrarily close to Euclidean distance in $C^\infty$-topology as $\theta\to 0$. Since $\la^{-1}\le \theta=\la^{-1}2^j\le 1$, we obtain
\begin{align*}
	\sum_j\|T_jf\|_{X}&\ls (\la^{\frac{n-k-3}2}\log\la+\la^{-2+\frac{n-k}{p_0'}}\log\la)\|f\|_{L^{p_0'}(M)}.
\end{align*}
It is better than the desired bound, since
\[\frac{n-k-3}2<\frac{n-k-1}2+\sigma(p_0)-1.\] So we complete the proof of \eqref{lowres} for period integrals, albeit with a potential $\log\log\la$ loss from  \eqref{nonlocal1} if  $\eps=(\log\la)^{-1}$. 

Since we have removed the log loss in Case 2, by the main argument in Section \ref{MG} we have proved Theorem \ref{mainthm2} for $k>n-4$ and Theorem \ref{pineg} without log loss. To remove the log loss for $k\le n/2$ and $n\ge5$, we  control the period integrals by the $L^2(\Sigma)$ norms, since $\delta(k,2)$ is exactly $\frac{n-k-1}2$ in this case. This  will be addressed in the next subsection.

\subsection{Restriction bounds} In this subsection, we prove \eqref{lowres} for $\|f\|_{X}=\|f\|_{L^{p_c}(\Sigma)}$ or $\|f\|_{L^2(\Sigma)}$.  Let $p_2=p$ if $\|f\|_X=\|f\|_{L^{p}(\Sigma)}$.


We first handle the remainder term in \eqref{Sjker}. By Young's inequality
\begin{align*}
	\|r_jf\|_{X}&\ls \la^{-1}\theta^{1-n}\cdot \theta^{\frac k{p_2}+\frac n{p_0}}\|f\|_{L^{p_0'}(M)}\\
&=\la^{n-2-\frac k{p_2}-\frac n{p_0}}2^{j(1-n+\frac k{p_2}+\frac n{p_0})}\|f\|_{L^{p_0'}(M)}
\end{align*}
Then
\[n-2-\frac k{p_2}-\frac n{p_0}\le\delta(k,p_2)+\sigma(p_0)-1,\]
\[1-n+\frac k{p_2}+\frac n{p_0}<0.\]
So summing over $j$ we get
\[\sum_{j}\|r_jf\|_{X}\ls \la^{\delta(k,p_2)+\sigma(p_0)-1}\|f\|_{L^{p_0'}(M)}.\]
Next, we handle $T_j$.  Let $T^\la$ be the oscillatory integral operator 
\begin{equation}\label{oscT}
T^\la f(x)=\int_M e^{i\la d_g(x,y)}a(x,y)f(y)dy,
\end{equation}
where $dy$ is the volume measure on $M$, and the smooth function $a(x,y)$ is supported in $\{d_g(x,y)\in (\frac14,4)\}$ and $|\partial_{x,y}^\gamma a(x,y)|\le C_\gamma.$
By the proof of the $L^2(M)-L^{p_c}(\Sigma)$ bound \eqref{bgt}  and interpolation with the trivial $L^1(M)-L^{p_c}(\Sigma)$ bound, we have for some $\delta_0>0$
\begin{equation}\label{key}\|T^\la \|_{L^{p_0'}(M)\to L^{p_c}(\Sigma)}\ls \la^{-\frac {2k}{p_0p_c}}\ls \la^{\frac{n-1}2-\frac n{p_0}-\frac k{p_c}-\delta_0},\end{equation}
whenever
\begin{equation}\label{krange}k<\frac{p_c}2\Big(n-\frac {1}{1-2/p_0}\Big).\end{equation}
We postpone the discussion on this condition and use it to obtain \eqref{lowres} first. 

Let $\{x_\ell\}$ be a maximal $\theta$-separated set in $\Sigma$. Suppose that $\rho_\theta$ is a smooth cutoff function on $\Sigma$ with support of diameter $\sim \theta$, and $\eta_\theta$ is a smooth cutoff function on $M$ with support of diameter $\sim \theta$, and $\{\rho_\theta(x-x_\ell)\}$ is a partition of unity on $\Sigma$. So by rescaling and using \eqref{key} we have
\begin{align*}
	&\|T_jf\|_{L^{p_c}(\Sigma)}= \la^{\frac{n-3}2}\theta^{-\frac{n-1}2}(\sum_\ell\int_\Sigma\Big|\int_M e^{i\la d_g(x,y)}a_\theta(x,y)\rho_\theta( x-x_\ell)\eta_\theta(y-x_\ell)f(y)dy\Big|^{p_c}dx)^{\frac1{p_c}}\\
	&=\la^{\frac{n-3}2}\theta^{-\frac{n-1}2}\theta^{n+\frac k{p_c}}(\sum_\ell\int_\Sigma\Big|\int_M e^{i2^j d_g(\theta x+x_\ell,\theta y+x_\ell)/\theta}a_\theta(\theta x+x_\ell,\theta y+x_\ell)\rho_\theta(\theta x)\eta_\theta(\theta y)f(\theta y+x_\ell)dy\Big|^{p_c}dx)^{\frac1{p_c}}\\
	&\ls\la^{\frac{n-3}2}\theta^{-\frac{n-1}2}\theta^{n+\frac k{p_c}}2^{j(\frac{n-1}2-\frac n{p_0}-\frac k{p_c}-\delta_0)}(\sum_\ell\|\eta_\theta(\theta y)f(\theta y+x_\ell)\|_{L^{p_0'}(M)}^{p_c})^{\frac1{p_c}}\\
	&\ls \la^{\frac{n-3}2}\theta^{-\frac{n-1}2}\theta^{n+\frac k{p_c}}2^{j(\frac{n-1}2-\frac n{p_0}-\frac k{p_c}-\delta_0)}(\sum_\ell\|\eta_\theta(\theta y)f(\theta y+x_\ell)\|_{L^{p_0'}(M)}^{p_0'})^{\frac1{p_0'}}\\
	&\ls \la^{\frac{n-3}2}\theta^{-\frac{n-1}2}\theta^{n+\frac k{p_c}}2^{j(\frac{n-1}2-\frac n{p_0}-\frac k{p_c}-\delta_0)}\theta^{-n/p_0'}\|f\|_{L^{p_0'}(M)}\\
	&=\la^{n-2-\frac k{p_c}-\frac n{p_0}}2^{-\delta_0j}\|f\|_{L^{p_0'}(M)}.
\end{align*}
Here we apply \eqref{key} to the operator with the phase $d_\theta(x,y)=d_g(\theta x+x_\ell,\theta y+x_\ell)/\theta$ and use the fact that $d_\theta(x,y)$ becomes arbitrarily close to Euclidean distance in $C^\infty$-topology as $\theta\to 0$. 
So summing over $j$ we get
\[\sum_{j}\|T_jf\|_{L^{p_c}(\Sigma)}\ls \la^{\delta(k,p_c)+\sigma(p_0)-1}\|f\|_{L^{p_0'}(M)}.\]
When $n=2$,  \eqref{krange} holds for $k=1$, $p_0=\infty$, $p_c=4$. When $n\ge3$, we fix $\frac1{p_0}=\frac{n+3}{2n+2}-\frac 2n$. Then  \eqref{krange} holds whenever 
\[k<\frac{p_c}2\Big(n-\frac1{1-2(\frac{n+3}{2n+2}-\frac 2n)}\Big)=\frac{p_c}2\cdot \frac{n(n+3)}{2(n+2)}.\]
When $k\le n-2$ and $p_c=2$, \eqref{krange} holds for $k\le n/2$. Similarly, when $k=n-1$ and $p_c=\frac{2n}{n-1}$, \eqref{krange} holds for $n=3,4.$  At the endpoint $p=2$ when $k=n-1$,  the log loss can be removed for $n=3,4$ and we postpone the proof to the end of this subsection.

When $\eps=1$, we obtain \eqref{lowres}  for $k\le n/2$ when $n\ge5$ and for all $k$ when $n=2,3,4$. For the remaining cases, we shall prove \eqref{lowres} for $p>q_k$ for some $q_k<2.3$ at the end of this subsection. These together complete the proof of Theorem \ref{mainthm}. 

To prove Theorem \ref{thmneg}, we need to use  the kernel bound \eqref{nonlocal2} to remove the $\log\log\la$ loss from \eqref{nonlocal1} when $\eps=(\log\la)^{-1}$. Indeed,  for $n\ge2$ we have
\[\frac{n-3}2<\delta(1,p_c)+\sigma(p_0)-1.\] 
So when $\eps=(\log\la)^{-1}$ we get \eqref{lowres} with $k=1$ for $n\ge2$. We remark that the argument in Subsection \ref{hcd} we only consider $\eps=1$, but the Base Step in Subsection \ref{hcd} can be used to obtain improved geodesic  restriction estimates with $\eps=(\log\la)^{-1}$, since a tubular neighborhood of a  geodesic segment can be viewed as the union of a family of geodesic segments. So we finish the proof of Theorem \ref{thmneg}.

\begin{remark}\label{rem3.1} Let $n\ge3$. To remove the log loss in Case 2, it suffices to establish
	\begin{equation}\label{beat}
		\|T^\la\|_{L^{p_0'}(M)\to L^{p_c}(\Sigma)}\ls \la^{\frac{n-1}2-\frac n{p_0}-\frac k{p_c}-\delta_0}
	\end{equation}	
	for some $\delta_0>0$ and $\frac1{p_0}\in [\frac{n+3}{2n+2}-\frac 2n,\frac{n-1}{2n+2}]$. We have proved it for $k\le n/2$ when $n\ge5$ and for all $k$ when $n\le4$. Moreover, in Section \ref{sectosc} we shall discuss it further in the model case where the metric is  Euclidean and the submanifold is flat. In this case, we can improve the range to  $k\le [2n/3]-2$ when $n\ge11$.
\end{remark}

 Since we have only handled the endpoint $p=p_c$ so far, to finish the proof we also need to remove the log loss for other exponents, including $p=2$ when $k=n-1$. 

 Let $n\ge3$. We first handle $p\ge p_c$. Suppose that for some $\alpha\ge0$
\begin{equation}\label{Lpkey}\|T^\la\|_{L^{p_0'}(M)\to L^p(\Sigma)}\ls \la^{-\alpha}.\end{equation}
Then by the previous argument we get
\begin{equation}\label{Tjb}\|T_j\|_{L^{p_0'}(M)\to L^p(\Sigma)}\ls \la^{n-2-\frac{k}p-\frac n{p_0}}2^{j(-\alpha-\frac{n-1}2+\frac n{p_0}+\frac kp)}.\end{equation}
To remove the log loss, we require that
\begin{equation}\label{cond1}n-2-\frac{k}p-\frac n{p_0}+\sigma(q_0)\le \frac{n-1}2-\frac kp
	\end{equation}
\begin{equation}\label{cond2}
	n-2-\frac{k}p-\frac n{p_0}+(-\alpha-\frac{n-1}2+\frac n{p_0}+\frac kp)+\sigma(q_0)<\frac{n-1}2-\frac kp.
\end{equation}
To get the largest range of $p$, we require that $q_0=\frac{2n+2}{n-1}$ and $\frac1{p_0}=\frac{n+3}{2n+2}-\frac2n$. Then \eqref{cond1} always holds, and \eqref{cond2} is equivalent to 
\begin{equation}\label{prange}
\alpha>	\frac kp-\frac{n+3}{2n+2}.
\end{equation}
Now it suffices to determine the best $\alpha$ in \eqref{Lpkey}.

When $k=n-1$,  by interpolation between $L^2-L^2$ and $L^1-L^\infty$ bounds we have
\begin{equation}\label{intp1}
	\|T^\la\|_{L^{p'}(M)\to L^p(\Sigma)}\ls \la^{-\frac1p(n-\frac32)}.\end{equation}
By Stein's oscillatory integral theorem (see \cite[Theorem 2.2.1]{fio}), we get
\begin{equation}\label{stein}\|T^\la\|_{L^{\frac{2n}{n+2}}(M)\to L^2(\Sigma)}\ls \la^{-\frac{(n-1)(n-2)}{2n}}.\end{equation}
By the interpolation between this and the $L^1-L^\infty$ bound, we get for $p_2=\frac{np}{n-2}$
\begin{equation}\label{intp2}
		\|T^\la\|_{L^{p_2'}(M)\to L^p(\Sigma)}\ls \la^{-(n-1)/p_2}.
\end{equation}
We require that $p<p_0<p_2$. By interpolation between \eqref{intp1} and \eqref{intp2} we get
\begin{equation}\label{bd1}
	\|T^\la\|_{L^{p_0'}(M)\to L^{p}(\Sigma)}\ls \la^{-\frac{n-2}{4p}-\frac{3n-4}{4p_0}}.
\end{equation}

When $k\le n-2$, similarly we have
\begin{equation}\label{intp11}
	\|T^\la\|_{L^{p'}(M)\to L^p(\Sigma)}\ls \la^{-k/p},\end{equation}
	and for $p_2=\frac{(k+1)p}{k-1}$
	\begin{equation}\label{intp22}
		\|T^\la\|_{L^{p_2'}(M)\to L^p(\Sigma)}\ls \la^{-k/p_2}.
	\end{equation}
	We require that $p<p_0<p_2$. By interpolation between \eqref{intp11} and \eqref{intp22} we get
	\begin{equation}\label{bd2}
		\|T^\la\|_{L^{p_0'}(M)\to L^{p}(\Sigma)}\ls \la^{-k/p_0}.
	\end{equation}
Thus, inserting the power $\alpha$ from \eqref{bd1} and \eqref{bd2} into \eqref{prange}, we can obtain the range of $p$. When $k=n-1$, we have the lower bound
\begin{equation}\label{r1}
	p>\frac{2n(-2+n+3n^2)}{16+4n-3n^2+3n^3}=2+\frac{8}{3n}-\frac4{3n^2}+O(\frac1{n^3})
\end{equation}
for $n\ge9$, and $p\ge p_c$ for $n\le 8$. 
When $k\le n-2$, we have the lower bounds
\begin{equation}\label{r2}
	p>\frac{2kn(n+1)}{k(n^2-n-4)+n^2+3n}=2+\frac{2(k+1)}{n^2}+O(\frac1{n^3}).
\end{equation}
It is easy to see that these lower bounds are less than 2.3.

Moreover, we can also consider $2\le p<p_c$ when $n\le 8$ and $k=n-1$. To remove the log loss, we require that
\begin{equation}\label{cond11}n-2-\frac{n-1}p-\frac n{p_0}+\sigma(q_0)\le \frac{n-1}4-\frac {n-2}{2p}
\end{equation}
\begin{equation}\label{cond21}
	n-2-\frac{n-1}p-\frac n{p_0}+(-\alpha-\frac{n-1}2+\frac n{p_0}+\frac {n-1}p)+\sigma(q_0)<\frac{n-1}4-\frac {n-2}{2p}.
\end{equation}
To get the largest range of $p$, we require that $q_0=\frac{2n+2}{n-1}$ and $\frac1{p_0}=\frac{n+3}{2n+2}-\frac2n$. Then \eqref{cond11} always holds, and \eqref{cond21} is equivalent to 
\begin{equation}\label{prange1}
\alpha>\frac{n-2}{2p}+\frac{n^2-2n-7}{4(n+1)}.
\end{equation}
Now it suffices to determine the best $\alpha$ in \eqref{Lpkey}. Indeed, we can interpolate between \eqref{intp2} and the trivial bound
\begin{equation}\label{young1}
	\|T^\la\|_{L^1(M)\to L^{p}(\Sigma)}\ls 1
	\end{equation}
to get
\begin{equation}\label{bd3}
		\|T^\la\|_{L^{p_0'}(M)\to L^{p}(\Sigma)}\ls \la^{-(n-1)/p_0}.
\end{equation}
Thus, inserting the power $\alpha$ from \eqref{bd3} (when $n\le 6$)  or \eqref{bd1} (when $n\ge7$) into \eqref{prange1} we get 
\[p>\begin{cases}
	6/5 = 1.200,\ \ \ \ \ \ \ \ n=3\\
	20/11\approx 1.818,\ \ \ \ \ n=4\\
	45/22\approx 2.045,\ \ \ \ \ n=5\\
	168/79\approx 2.127,\ \ \ \  n=6\\
	280/127\approx 2.205,\ \ \ n=7\\
	9/4=2.250,\ \ \ \ \ \ \ \ \ n=8.
\end{cases}\]
So in particular we can remove the loss at the endpoint $p=2$ when $n=3,4$. 
	\begin{remark}\label{compare}These ranges are larger than those by Blair-Park \cite[Theorems 1.3 \& 1.4]{BP}, since in the argument above we do not need the ``uniform  resolvent conditions'': $n-2-\frac{k}p-\frac n{p_0}=0$ and the power of $2^j$ is negative in \eqref{Tjb}. For instance, when $n\ge 8$ they proved that when $k=n-1$,
	\[p>\frac{2n^2-5n+4}{n^2-4n+8}=2+\frac3n+O\Big(\frac1{n^3}\Big),\]
	and when $k=n-2$,
	\[p>\frac{2(n-2)^2}{n^2-5n+8}=2+\frac2n+\frac2{n^2}+O\Big(\frac1{n^3}\Big).\]
	These can be compared with \eqref{r1} and \eqref{r2}.
\end{remark}

\section{Restriction to curved hypersurfaces}

In the potential-free case, for $k=n-1$ and $p\ge \frac{2n}{n-1}$, zonal functions saturate the restriction estimate in Theorem \ref{mainthm} for any hypersurface. However, for $ p<\frac{2n}{n-1}$, Burq--G\'erard--Tzvetkov \cite{BGT} and Hu \cite{Hu} proved that a power improvement is possible for curved hypersurfaces. We prove this type of improved estimates for singular potentials.

\begin{theorem}\label{mainthmcurv}Let $M$ be a compact manifold of dimension $n\ge2$. Let $\Sigma\subset M$  be a hypersurface with positive
	(or negative) definite second fundamental form. Suppose $2\le p<\frac{2n}{n-1}$.  If  $V\in \mathcal{K}(M)\cap L^{n/2}(M)$, then we have
	\begin{equation}\label{PVbdcurv}
		\|e_\lambda\|_{ L^p(\Sigma)}\ls \la^{\frac{n-1}3-\frac{2n-3}{3p}}\|e_\lambda\|_{L^2(M)},
	\end{equation}
albeit with a potential logarithmic loss  when $(n,p)\in  \{(n,p):n\ge 4, p <2.3\}$. The loss can be removed if $V\in L^q(M)$ with $q>n/2$.
\end{theorem}
As in Theorem \ref{thmneg}, we can obtain improved bounds on negatively curved surfaces. These results were established by Park \cite{park} for Laplacian eigenfunctions.

\begin{theorem}\label{thmcurv2}Let $M$ be a negatively curved surface. Let $\Sigma\subset M$  be a curve  with non-vanishing geodesic curvature. Suppose $2\le p<4$.  If  $V\in \mathcal{K}(M)$, then we have
	\begin{equation}\label{PVbdcurv2}
		\|e_\lambda\|_{ L^p(\Sigma)}\ls \la^{\frac{1}3-\frac{1}{3p}}(\log\la)^{-\frac12}\|e_\lambda\|_{L^2(M)}.
	\end{equation}
\end{theorem}
The proof of Theorem \ref{thmcurv2} is essentially the same as the proof of Theorem \ref{thmneg}, so we only prove Theorem \ref{mainthmcurv}.
\begin{proof}[Proof of Theorem \ref{mainthmcurv}]
	The main argument in Section \ref{MG} can still work if we replace the exponent $\delta(k,p)$ by $\tilde\delta(k,p)=\frac{n-1}3-\frac{2n-3}{3p}$. We just need to refine Case 2 to remove the log loss. We follow the strategy in Section \ref{refine}. We first consider $p\ge p_c$. By interpolation between  the $L^2-L^2$ bound  and the  $L^1-L^\infty$ bound we have
	\begin{equation}\label{intp12}
		\|T^\la\|_{L^{p'}(M)\to L^p(\Sigma)}\ls \la^{-\frac1p(n-\frac43)}.\end{equation}
	By interpolation between \eqref{intp12} and \eqref{intp2} we get
	\begin{equation}\label{bd12}
		\|T^\la\|_{L^{p_0'}(M)\to L^{p}(\Sigma)}\ls \la^{-\frac{n-2}{6p}-\frac{5n-6}{6p_0}}.
	\end{equation}
	Thus, inserting the power $\alpha$ from \eqref{bd12} into \eqref{prange}, we can obtain the lower bound
	\[p>\frac{2 (-4 n + n^2 + 5 n^3)}{24 + 4 n - 5 n^2 + 5 n^3}=2+\frac{12}{5n}-\frac4{5n^2}+O(\frac1{n^3})\]
	for $n\ge11$, and $p\ge p_c$ for $n\le 10$. It is easy to see that this bound is less than 2.3.
	
	Next, we can still consider $2\le p<p_c$ for $n\le 10$. To remove the log loss, we require
	\begin{equation}\label{cond12}n-2-\frac{n-1}p-\frac n{p_0}+\sigma(q_0)\le \frac{n-1}3-\frac {2n-3}{3p}
	\end{equation}
	\begin{equation}\label{cond22}
		n-2-\frac{n-1}p-\frac n{p_0}+(-\alpha-\frac{n-1}2+\frac n{p_0}+\frac {n-1}p)+\sigma(q_0)<\frac{n-1}3-\frac {2n-3}{3p}.
	\end{equation}
	To get the largest range of $p$, we require that $q_0=\frac{2n+2}{n-1}$ and $\frac1{p_0}=\frac{n+3}{2n+2}-\frac2n$ when $n\ge3$ and that $p_0=q_0=\infty$ when $n=2$. Then \eqref{cond12} always holds, and \eqref{cond22} is equivalent to 
	\begin{equation}\label{prange12}
		\alpha>\frac{2n-3}{3p}+\frac{n^2-3n-10}{6(n+1)},\ \ \text{for}\ n\ge3,
	\end{equation}
	and 
	\begin{equation}\label{prange122}
		\alpha>\frac1{3p}-\frac13,\ \ \text{for}\ n=2.
	\end{equation}
	Now it suffices to determine the best $\alpha$ in \eqref{Lpkey}. Indeed, 
	Thus, inserting the power $\alpha$ from \eqref{bd3} (when $n\le 5$)  or \eqref{bd12} (when $n\ge6$) into \eqref{prange12} or \eqref{prange122} we get 
	\[p>\begin{cases}
		1,\quad\quad\quad\quad\quad\quad\quad\quad n=2\\
		12/7\approx 1.714,\ \ \ \ \ \ \ \ n=3\\
		25/12\approx2.083,\ \ \ \ \ \ \ n=4\\
		35/16\approx2.188,\ \ \ \ \ \  \ n=5\\
		49/22\approx2.227,\ \ \ \  \ \ \ n=6\\
		56/25=2.240,\ \ \ \ \ \ \ n=7\\
		360/161\approx 2.236,\ \ \ \ \   n=8\\
		69/31\approx 2.226,\ \ \ \  \ \ \ \ \ n=9\\
		715/323\approx 2.214,\ \ \ \  \ \ n=10.
	\end{cases}\]
 So in particular these ranges cover the  endpoint $p=2$ when $n=2,3$. 
\end{proof}

\section{Restriction of toral eigenfunctions}
Let $\eps=\la^{-1}$ and  $\1_\la=\1_{[\la-\eps,\la+\eps]}$.  Note that the interval $[\la-\eps,\la+\eps]$ essentially contain at most  one eigenvalue of $P^0=\sqrt{-\Delta_g}$ on $\mathbb{T}^2$. 
By Bourgain-Rudnick \cite[Main Theorem]{br2012} and Huang-Zhang \cite[Theorem 2]{hzapde}, if $\Sigma\subset \mathbb{T}^2$ is a  curve segment with nonvanishing geodeisc curvature or a  segment of a closed geodesic, then for all $\la$ we have  \begin{equation}\label{tori1}
	\|\1_\la(P^0)f\|_{L^2(\Sigma)}\ls \|f\|_{L^2(\mathbb{T}^2)}.
\end{equation}   
Moreover, by Huang-Zhang \cite[Theorem 1]{hzapde}, if $\Sigma$ is a geodesic segment, then for all $\la$ we have
\begin{equation}\label{tori2}
	\|\1_\la(P^0)f\|_{L^2(\Sigma)}\ls\max_{r^2\in \mathbb{N}:|r-\la|\le\la^{-1}}\sqrt{N_{1,r}} \|f\|_{L^2(\mathbb{T}^2)},
\end{equation}  
where $N_{1,\la}$ is the maximum number of lattice points on an arc of length $\la^{1/2}$ on the circle $\{x\in \mathbb{R}^2:|x|=\la\}$. Bourgain-Rudnick \cite[Lemma 2.1]{br2015} proved that $N_{1,\la}\ls \log\la$. It was conjectured that $N_{1,\la}\ls 1$.

We shall extend these results for $H_V$ with $V\in L^2(\mathbb{T}^2)$. Our proof of Theorem \ref{BRthm} is a combination of the main argument in Section \ref{MG} with the following two well-known results. The first one is the uniform $L^4$ bounds by Cooke \cite{cook} and Zygmund \cite{zyg}: for all $\la$ we have
\begin{equation}\label{zyg}
	\|\1_\la(P^0)f\|_{L^4(\mathbb{T}^2)}\ls \|f\|_{L^2(\mathbb{T}^2)}.
\end{equation}
The second one is the spectral projection bounds by Bourgain-Burq-Zworski \cite[Prop. 2.4]{BBZ}: for all $\delta>\la^{-1}$ we have
\begin{equation}\label{proj}
	\|\1_{[\la,\la+\delta]}(P^0)\|_{L^4(\mathbb{T}^2)}\ls (\la\delta)^{\frac14}\|f\|_{L^2(\mathbb{T}^2)}.
\end{equation}
Using these bounds, Bourgain-Burq-Zworski \cite[Prop. 2.6]{BBZ} obtained  the uniform $L^4$ bounds  for $H_V$ with $V\in L^2(\mathbb{T}^2)$:
\begin{equation}\label{bbz}
	\|\1_\la(P_V)f\|_{L^4(\mathbb{T}^2)}\ls \|f\|_{L^2(\mathbb{T}^2)}.
\end{equation}

We just need to slightly modify the main argument in Section \ref{MG} to employ these estimates. First, we choose $p_0=q_0=4$ and $q=2$ throughout the proof. Second, we can employ the \eqref{proj} in Case 2 to remove the log loss there, since the bound in \eqref{proj} on the flat torus is smaller than the rough bound $(\la\delta)^{\frac12}$. Then running the main argument in Section \ref{MG} gives
\begin{equation}\label{final}
	\|\chi_\la(P_V)f\|_{L^2(\Sigma)}=\|\chi_\la(P^0)f\|_{L^2(\Sigma)}+O( \|V\|_{L^2}\|f\|_{L^2}).
\end{equation}
 So we complete the proof of Theorem \ref{BRthm} by using  \eqref{final} together with \eqref{tori1} and \eqref{tori2}.

\begin{remark}
	Recall that Bourgain-Rudnick \cite[Main Theorem]{br2012} also obtained uniform $L^2$-restriction estimates for real-analytic surfaces $\Sigma\subset \mathbb{T}^3$ with nonvanishing curvature.  It is an interesting open problem to extend  this result to $H_V$ with singular potentials. Due to the lack of the uniform $L^p$ bounds  on $\mathbb{T}^3$, currently we are not able to resolve this problem on $\mathbb{T}^3$.
\end{remark}
\section{Appendix: Further discussions on oscillatory integrals}\label{sectosc}

In the Section 4, we have removed the log loss for $k\le n/2$ when $n\ge5$ and for all $k$ when $n\le 4$. It remains to handle  $k>\frac{n}{2}$ when $n\ge5$. By Remark \ref{rem3.1}, we can reduce the problem to the estimate of the oscillatory integral operator in \eqref{oscT}. In this section, we shall further investigate this operator bound. We show that in the Euclidean model case, one can improve the range of $k$ to $k\le [2n/3]-2$ for $n\ge11$.

For the Euclidean distance, we have $d_g(x,y)=\sqrt{|u-y|^2+|x'|^2}$ with $x=(u,x')\in \mathbb{R}^k\times \mathbb{R}^{n-k}$ and $y\in \mathbb{R}^k$. Let
\begin{equation}\label{opt}
	T_\la f(x)=\int_{\mathbb{R}^k}e^{i\la\sqrt{|u-y|^2+|x'|^2}}a(x,y)f(y)dy,
\end{equation}
where $a\in C_0^\infty$ is supported in $\{(x,y): d_g(x,y)\approx 1\}$ and $|\partial_{x,y}^\gamma a(x,y)|\le C_\gamma.$  This is the adjoint operator of \eqref{oscT}. In our problem, we are  interested in the norm $$\|T_\la\|_{L^{p_c'}(\mathbb{R}^k)\to L^{p_0}(\mathbb{R}^n)}$$ where $\frac1{p_0}\in [\frac{n+3}{2n+2}-\frac2n,\frac{n-1}{2n+2}]$, $n\ge5$ and $k>n/2$. By Remark \ref{rem3.1}, we want to beat the bound $\la^{-\frac k{p_c}+\frac{n-1}2-\frac n{p_0}}$.

\subsection{Lower bound}

\noindent \textbf{Knapp type.}
Suppose that $y\sim 0$ and $x\sim e_n$ on the support of $a(x,y)$,  and $a(x,y)$ is real-valued and has a fixed sign on the support. Here $e_n=(0,...0,1)$. We fix $f(y)=1$ when $|y|<\frac1{100}\la^{-\frac12}$ and $f(y)=0$ otherwise. Let $$E=\{x=(u,x')\in\mathbb{R}^n:|u|<\frac1{100}\la^{-\frac12},\ |x-e_n|<\frac1{100}\}.$$  Then
\[|\sqrt{|u-y|^2+|x'|^2}-|x'||\le \frac1{10}\la^{-1}\]
if $x\in E$ and $|y|<\frac1{100}\la^{-\frac12}$. Thus,
\[|T_\la f(x)|=|e^{-i\la|x'|}T_\la f(x)|\approx \la^{-k/2},\ \ \forall x\in E.\]
We get
\[\|T_\la f\|_{L^{q}(\mathbb{R}^n)}\gs \la^{-\frac k2-\frac{k}{2q}},\]
and
\[\|f\|_{L^p(\mathbb{R}^k)}\approx \la^{-\frac k{2p}}.\]
So we have
\[\|T_\la\|_{L^p(\mathbb{R}^k)\to L^q(\mathbb{R}^n)}\gs \la^{-\frac k2(\frac1{p'}+\frac1q)}.\]
This lower bound is sharp when $p'=q$ and $k\le n-2$.

\noindent \textbf{Gaussian beam type.}
Suppose that $x\sim 0$ and $y\sim e_1$ on the support of $a(x,y)$,  and $a(x,y)$ is real-valued and has a fixed sign on the support. Here $e_1=(1,0,...,0)$. Let $y=(y_1,y')$. We fix $f(y)=e^{-i\la y_1}$ when $|y_1-1|<\frac1{100}$ and $|y'|<\frac1{100}\la^{-\frac12}$, and $f(y)=0$ otherwise. Let 
$$E=\{x=(u_1,u',x')\in \mathbb{R}^n:|u_1|<\frac1{100},\ |u'|^2+|x'|^2<\frac1{100}\la^{-1}\}$$
Then
$$|\sqrt{|y_1-u_1|^2+|u'-y'|^2+|x'|^2}-(y_1-u_1)|\le \frac1{10}\la^{-1}$$
if $x\in E$ and $|y'|<\frac1{100}\la^{-\frac12}$.
Thus,
$$|T_\la(x)|=|e^{i\la u_1}T_\la f(x)|\approx \la^{-\frac{k-1}2},\ \forall x\in E.$$
We get
\[\|T_\la f\|_{L^{q}(\mathbb{R}^n)}\gs \la^{-\frac{k-1}2-\frac{n-1}{2q}},\]
and
\[\|f\|_{L^p(\mathbb{R}^k)}\approx \la^{-\frac {k-1}{2p}}.\]
So we have
\[\|T_\la\|_{L^p(\mathbb{R}^k)\to L^q(\mathbb{R}^n)}\gs \la^{-\frac{k-1}{2p'}-\frac{n-1}{2q}}.\]
This lower bound is sharp when $p'=q$ and $k\ge n-2$.
\subsection{An upper bound} 
We shall use the Stein-Tomas argument to estimate an upper bound.  	We only consider $k\le n-2$ in the following, since  $p_c=2$ and we can apply $TT^*$ argument. 

For any fixed $x'=t\omega$ with $t=|x'|$ we analyze the operator 
\[T_\la^t f(u)=\int_{\mathbb{R}^k}e^{i\la \sqrt{|u-y|^2+t^2}}a(u,t\omega, y)f(y)dy.\]
We shall establish operator bounds for $T_\la^t$ independent of $\omega$. We first claim that if  $t,t'\approx r$ and $t\ne t'$ then
\begin{equation}\label{bound1}
	\|T_\la^tf\|_{L^2(\mathbb{R}^k)}\ls \la^{-k/2}r^{-(k+1)/2}\|f\|_{L^2(\mathbb{R}^k)},
\end{equation}
\begin{equation}\label{bound2}
		\|T_\la^t(T_\la^{t'})^*f\|_{L^\infty(\mathbb{R}^k)}\ls ((\la r^{-1}|t-t'|)^{-k/2}r^{-1}+(1+\la r^{11}|t-t'|)^{-N})\|f\|_{L^1(\mathbb{R}^k)},\ \forall N.
\end{equation}
Then by  the proof of the Stein-Tomas restriction theorem (see \cite[Corollary 0.3.7]{fio}), we have for $q=2(k+2)/k$
\begin{equation}\label{qbound1}(\int_{r}^{2r}\|T_\la^tf\|_{L^{q}(\mathbb{R}^k)}^qdt)^{\frac1q}\ls \la^{-\frac{k+1}q} r^{-\frac{k+12}{q}} \|f\|_{L^{2}(\mathbb{R}^k)}.\end{equation}
Moreover, by fixing one variable, one can verify the Carleson-Sj\"olin condition and apply Stein's oscillatory integral theorem (see \cite[Theorem 2.2.1]{fio}). We claim that
\begin{equation}\label{bound3}
	\|T_\la^tf\|_{L^2(\mathbb{R}^k)}\ls \la^{-(k-1)/2}\|f\|_{L^2(\mathbb{R}^k)}
\end{equation}
\begin{equation}\label{bound4}
	\|T_\la^tf\|_{L^{q_0}(\mathbb{R}^k)}\ls \la^{-k/q_0} \|f\|_{L^{2}(\mathbb{R}^k)}
\end{equation}
for $q_0=\frac{2(k+1)}{k-1}$, and then by interpolation we get
\begin{equation}
	\|T_\la^tf\|_{L^{q}(\mathbb{R}^k)}\ls \la^{-\frac{k+1}q}\la^{\frac{k+1}{kq}} \|f\|_{L^{2}(\mathbb{R}^k)}.
\end{equation}
So
\begin{equation}\label{qbound2}(\int_{r}^{2r}\|T_\la^tf\|_{L^{q}(\mathbb{R}^k)}^qdt)^{\frac1q}\ls \la^{-\frac{k+1}q}\la^{\frac{k+1}{kq}}r^{1/q} \|f\|_{L^{2}(\mathbb{R}^k)}.\end{equation}

Using the polar coordinate in $x'$ and the dyadic decomposition in  $t$, by \eqref{qbound1} and \eqref{qbound2} we get
\begin{align*}\|T_\la f\|_{L^q(\mathbb{R}^n)}&\ls \la^{-(k+1)/q}\Big(\sum_{j\ge0} \min\{2^{j(k+12)},\ \la^{\frac{k+1}k}2^{-j}\}\cdot 2^{-j(n-k-1)}\Big)^{1/q}\|f\|_{L^2(\mathbb{R}^k)}\\
	&=\la^{-\frac{(1 + k) (-13 + 11 k + k^2 + n)}{2 (2 + k) (13 + k)}}\|f\|_{L^2(\mathbb{R}^k)}.\end{align*}
 By the proof of \eqref{bgt} we have for $k\le n-3$
\[\|T_\la f\|_{L^2(\mathbb{R}^n)}\ls \la^{-k/2}\|f\|_{L^2(\mathbb{R}^k)}.\]
We  fix $\frac1{p_0}=\frac{n+3}{2n+2}-\frac2n$. Note that $p_0\in [2,q]$ when $k\le n-3$. So by interpolation we get
\begin{equation}
	\|T_\la f\|_{L^{p_0}(\mathbb{R}^n)}\ls \la^{\alpha(p_0,k,n)}\|f\|_{L^2(\mathbb{R}^k)},
\end{equation}
where \begin{align*}
	\alpha(p_0,k,n)&=\frac{26 + 11 n - n^2 + k (56 + 13 n - 14 n^2) + k^2 (6 + 2 n - n^2)}{2 (13 + k) n (1 + n)}\\
	&=-\frac{k}{2} + \frac{3k}{2n}-\frac{1}{2}+O(\frac{1}{n})
\end{align*}
for $n/2<k\le n-3$.
However, $$-\frac k2+\frac{n-1}2-\frac n{p_0}=-\frac k2+\frac12+O(\frac1n).$$ A direct calculation shows that  $\alpha(p_0,k,n)<-\frac k2+\frac{n-1}2-\frac n{p_0}$ 
 holds for $k\le [2n/3]-2$ when $n\ge11$. This improves the range  $k\le n/2$ obtained in Section 4.  

However, we have
\[\alpha(p_0,k,n)+\frac k2(\frac1{p_0}+\frac12)=\frac{(1 + k) (13 + 2 k - n) (2 + n)}{2 (13 + k) n (1 + n)}>0\]
for $n/2<k\le n-3$. So the upper bound here is strictly greater than the Knapp type lower bound. It would be interesting to find the sharp upper bound.

\begin{proof}[Proof of Claims]
	 Now we prove the claims \eqref{bound1}, \eqref{bound2}, \eqref{bound3}, \eqref{bound4}.

To prove \eqref{bound1}, by the $TT^*$ argument and Young's inequality, it suffices to estimate the kernel
\[K(y,z)=\int_{\mathbb{R}^k}e^{i\la(\sqrt{|u-y|^2+t^2}-\sqrt{|u-z|^2+t^2})}a(y,t\omega,u)\overline{a(u,t\omega,z)}du\]
and show that for $t\approx r$ we have
\begin{equation}\label{young11}
	\sup_y\int |K(y,z)|dz\ls \la^{-k}r^{-k-1}.
\end{equation}
Indeed,  we may assume that  $y=e_1$ and $z=z_1e_1$, where $e_1=(1,0,...,0)$ and $z_1\sim 1$. Let $u=(u_1,u')\in\mathbb{R}\times \mathbb{R}^{k-1}$. Then the phase function can be written as
\begin{equation}\label{phase}\phi(u)=\sqrt{|u_1-1|^2+|u'|^2+t^2}-\sqrt{|u_1-z_1|^2+|u'|^2+t^2}.\end{equation}
We get $|\partial_{u_1}\phi(u)|\approx  (r^2+|u'|^2)|y-z|$ and $|\partial_{u_1}^\alpha \phi(u)|\ls (r^2+|u'|^2)|y-z|$, $\forall \alpha$. Integration by parts in $u_1$ gives
\begin{align*}|K(y,z)|&\ls \int (1+\la(r^2+|u'|^2)|y-z|)^{-2N}d u'\\
	&\ls (1+\la r^2|y-z|)^{-N}(\la|y-z|)^{-\frac{k-1}2},\ \forall N.
\end{align*}
So we obtain \eqref{young11}.

To prove \eqref{bound2}, by Young's inequality we just need to estimate the kernel
\[K_{t,t'}(y,z)=\int_{\mathbb{R}^k}e^{i\la(\sqrt{|u-y|^2+t^2}-\sqrt{|u-z|^2+t'^2})}a(y,t\omega,u)\overline{a(u,t'\omega,z)}du\]
and show that if $t,t'\approx r$ and $t\ne t'$ then
\begin{equation}\label{young2}
	\sup_{y,z}|K_{t,t'}(y,z)|\ls (\la r^{-1}|t-t'|)^{-k/2}r^{-1}+(1+\la r^{11}|t-t'|)^{-N},\ \ \forall N.
\end{equation}
Indeed, as before may assume that  $y=e_1$ and $z=z_1e_1$ and the phase function
\begin{equation}\label{phase2}\phi(u)=\sqrt{|u_1-1|^2+|u'|^2+t^2}-\sqrt{|u_1-z_1|^2+|u'|^2+t'^2}.\end{equation}
Then $\partial_u\phi(u)=0$ has a unique zero $$ u_c=\frac{tz-t'y}{t-t'}=y+t(z-y)/(t-t')$$ and the Hessian matrix $$\partial_u^2\phi( u_c)=(\frac1t-\frac1{t'})(1+\beta^2)^{-\frac12}diag((1+\beta^2)^{-1},1,...,1)$$
where $\beta=|y-z|/|t-t'|$.  
Then we have either $|u-u_c|\ls1$ or  $|u-u_c|\approx r\beta$. This implies the upper bound
\begin{equation}\label{uppu}|(u-y)-\tfrac t{t'}(u-z)|=\frac{|t-t'|}{t'}|u-u_c|\ls r^{-1}(|t-t'|+r|y-z|).\end{equation}
When $|u-u_c|\gs1$, we also have the lower bound
\begin{equation}\label{lowu}|(u-y)-\tfrac t{t'}(u-z)|\gs r^{-1}(|t-t'|+r|y-z|).\end{equation}
Thus, when  $|u-u_c|\gs1$ we obtain 
\begin{align*}|\partial_u \phi(u)|&=\Big|\frac{u-y}{\sqrt{|u-y|^2+t^2}}-\frac{u-z}{\sqrt{|u-z|^2+t'^2}}\Big|\\
	&=\Big|\frac{u-y}{\sqrt{|u-y|^2+t^2}}-\frac{\frac t{t'}(u-z)}{\sqrt{|\frac t{t'}(u-z)|^2+t^2}}\Big|\\
	&\gs t^2|(u-y)-\tfrac t{t'}(u-z)|\\
&\approx r(|t-t'|+r|y-z|).
\end{align*}
Here we use the mean value theorem in the third line. Similarly, by \eqref{uppu} we get $$|\partial_u^\alpha \phi(u)|\ls r^{-1}(|t-t'|+r|y-z|),\ \ \forall  \alpha.$$ So when $|u-u_c|\gs1$, integration by parts gives the bound $(1+\la r^3|t-t'|)^{-N}$, which is better than the second bound in \eqref{young2}. So it suffices to consider $u\sim u_c$. This implies  $|t-t'|\gs r|y-z|$. In this case, we can similarly obtain $$|u-u_c|/|\partial_u\phi(u)|\ls r^{-1}|t-t'|^{-1}$$$$|\partial_u^\alpha \phi(u)|\ls r^{-1}|t-t'|,\ \ \forall \alpha.$$ By  stationary phase (see H\"ormander \cite[Theorem 7.7.5]{hor}) we get 
\[|K_{t,t'}(y,z)|\ls (\la r^{-1}|t-t'|)^{-k/2}r^{-1}+(1+\la r^{11}|t-t'|)^{-N},\ \ \forall N,\] which gives \eqref{young2}. The first term comes from the leading terms in the stationary phase expansion, and the second term comes from the remainder term.

To prove \eqref{bound3}  and \eqref{bound4}, we may assume that $x=(u,x')\sim 0$ and $y\sim e_1$ on the support of $a(x,y)$. Then $|u_1-y_1|\approx 1$. Fix $u_1,y_1$ and let $s^2=t^2+|u_1-y_1|^2$. Let
\[\tilde T_\la^{s}g(u')=\int_{\mathbb{R}^{k-1}}e^{i\la \sqrt{|u'-y'|^2+s^2}}a(u_1,u',t\omega, y_1,y')g(y')dy'.\]
Then \eqref{bound3} and \eqref{bound4} directly follows from the Minkovski inequality and the argument above, where $r$ is replaced by $s\approx 1$, and $k$ is replaced by $k-1$. 
\end{proof}

\subsubsection{Discussions of oscillatory integrals}

 Both of the lower bounds are still valid on general manifolds. The Knapp type lower bound is greater than the Gaussian beam type lower bound if and only if $k<n-1-\frac{q}{p'}$. Both of the lower bounds are sharp when $p'=q$. If we fix  $\frac1{p_0}=\frac{n+3}{2n+2}-\frac2n$, then  the lower bounds are strictly less than the bounds that we want to beat (see \eqref{prange}, \eqref{prange1}) when $(p,q)=(p_c',p_0)$. One might expect that these two examples together saturate the sharp upper bounds of the oscillatory integral operator.

The proof of the upper bound relies on the explicit formula of the Euclidean distance and the flatness of the submanifold. The main difficulty is that the rank of mixed Hessian $\partial_x\partial_y d_g(x,y)$ becomes degenerate (rank=$k-1$) when $x\in  \mathbb{R}^k\times \{0\}^{n-k}$. In general, this degeneracy happens when the geodesic connecting $x\in M$ and $y\in \Sigma$ is tangent to the submanifold $\Sigma$ at $y$. We call the point $x$ a degenerating point of $\Sigma$ if such degeneracy occurs for some $y\in\Sigma$. Let the degenerating set of $\Sigma$ be the collection of all such degenerating points. For example, in the above model case, the degenerating set of $\mathbb{R}^k$ is just $\mathbb{R}^k$ itself. However, in general, the dimension of the degenerating set can be as large as $\min\{2k,n\}$, making it difficult to precisely estimate the operator norm.

\section*{Declarations}
\noindent \textbf{Data availability statement.} Data sharing not applicable to this article as no datasets were generated or analyzed during the current study.

\noindent \textbf{Conflict of interests.} The authors have no relevant financial or non-financial interests to disclose.
	\bibliographystyle{plain}
	
\end{document}